\documentclass[article]{siamart1116}
\usepackage{amsmath}
\usepackage{float}
\usepackage{diagbox}
\usepackage{bbold}


\newtheorem{thm}{Theorem}
\newtheorem{lem}{Lemma}
\newtheorem{cor}{Corollary}
\newtheorem{rmk}{Remark}

\newcommand{\be}{\begin{equation}}
\newcommand{\ee}{\end{equation}}

\newcommand{\dx}{\Delta x}
\newcommand{\dt}{\Delta t}

\newcommand{\m}[1]{\mathbf{#1}}
\newcommand{\mA}{\m{A}}

\newcommand{\mD}{\m{D}}

\newcommand{\mR}{\m{R}}

\newcommand{\mI}{\m{I}}


\renewcommand{\v}[1]{\boldsymbol{#1}}

\newcommand{\vc}{\v{c}}
\newcommand{\vP}{\v{P}}
\newcommand{\ve}{{\mathbb{1}}}

\newcommand{\vy}{\v{y}}

\newcommand{\vt}{\v{t}}


\newcommand{\ste}{\boldsymbol{\tau}}

\renewcommand{\v}[1]{\mathbf{#1}}

\title{Explicit and implicit error inhibiting schemes with post-processing}

\author{%
Adi Ditkowski\thanks{School of Mathematical Sciences, Tel Aviv University, Tel Aviv 69978, Israel. 
Email: adid@post.tau.ac.il} \and
Sigal Gottlieb\thanks{Mathematics Department, University of Massachusetts Dartmouth, 285 Old Westport Road,
North Dartmouth MA 02747. Email: sgottlieb@umassd.edu} \and
Zachary J. Grant\thanks{Department of Computational and Applied Mathematics, Oak Ridge National Laboratory, Oak Ridge TN 37830. Email: grantzj@ornl.gov} 
}

\begin{document}
\maketitle


\bibliographystyle{siam}

\begin{abstract} 
Efficient high order numerical methods for evolving the solution of an ordinary differential
equation are widely used. 
The popular Runge--Kutta methods, linear multi-step methods, and more broadly general linear methods,
 all have a global error that is completely determined  by analysis of the local truncation error.
In prior work we  investigated the interplay between
the local truncation error and the global error  to construct {\em error inhibiting schemes}
that control  the accumulation of the local truncation error over time,  resulting in a global
error that is one order higher than expected from the local truncation error. 
In this work we extend our error inhibiting framework introduced in \cite{EISpaper1} to include a broader class of
time-discretization methods that allows an exact computation of the leading error term, which can then be
 post-processed to obtain a solution that is two orders higher than expected from
 truncation error analysis. We define sufficient conditions that result in a desired form of the error and describe
  the construction of the post-processor. A number of new explicit and implicit methods that have this property are
  given and tested on a variety of  ordinary and partial  differential equations.
  We show that these methods provide a solution that is two orders higher than expected from
  truncation error analysis alone.
 \end{abstract}

\section{Introduction\label{sec:intro}}

Efficient high order time evolution methods are of interest in many simulations, particularly when evolving in time
a system of ordinary differential equations (ODEs) resulting from a semi-discretized partial differential equation.
In this work we consider an approach to developing methods that are of higher order than expected from 
truncation error analysis. We first present some background on numerical methods for ODEs and define all the relevant terms.

We consider numerical solvers for  ODEs. Without loss of generality, we can consider the autonomous
equation of the form
\begin{eqnarray}\label{ODE}
& & u_t =   F(u)  \;,\;\;\;\;\;  t \ge t_0  \\
& & u(t_0) =u_0 .\; \nonumber
\end{eqnarray}
The most basic of these numerical solvers is  the  forward Euler method
\[ v^{n+1} = v^n + \Delta t F(v^n) \; , \] 
where $v_n$ approximates the exact solution $v^n \approx u(t_n)$ at some time $t_n = t_0 + n \dt$.
The forward Euler method has local truncation error $LTE^n$ and approximation error  $\tau^n$ at any given time 
$t_n$ defined by
\[ \tau^n  = \dt LTE^n =   u(t^{n-1})  +  \Delta t F( u(t_{n-1}) )  -   u(t_{n}) \approx O(\Delta t^2) \]
and a global error  which is first order accurate
\[ E^n = v^n - u(t_n)   \approx O(\Delta t) .\]
We note that while we follow the convention in   \cite{gustafsson1995time,quarteroni2010numerical,AllenIsaacson,IsaacsonKeller,Sewell}
and define the local truncation error as the normalized approximation error, 
\[   LTE^n = \frac{1}{\dt}  \tau^n ,\]
an alternative notation in common use is to  define the  local truncation error to be the approximation error. 
Using our notation, the local truncation error and the global error are of the same order for the forward Euler method.
Using the alternative notation, the local truncation error is one order higher than the global error.

If we want a more accurate method than forward Euler, 
we can add steps and define a linear multistep method \cite{butcher2008numerical}
\begin{equation}\label{multistep}
v^{n+1} \,=  \sum_{j=1}^s \alpha_j\,  v^{n+1-j}  + \, \Delta t  \sum_{j=0}^s \beta_j F(v^{n+1-j} ),
\end{equation}
or we  can use multiple stages as in  Runge--Kutta methods \cite{butcher2008numerical}:
\begin{eqnarray}\label{RK_Butcher}
y_i  & = & v^n+ \Delta t  \sum_{j=1}^m a_{ij}  F\left(   y_{j}  \right)\; \; \;  \mbox{for} \; \; j=1,...,m \nonumber \\
v^{n+1}& =& v_n + \Delta t \sum_{j=1}^m b_j F\left(  y_{j}\right) .
\end{eqnarray}

It is also possible to combine the approaches above, by starting with a Runge--Kutta method of the form \eqref{RK_Butcher}
but including older time-steps as in linear multistep methods \eqref{multistep}.
These are the general linear methods described in \cite{butcher1993a,JackiewiczBook} and may be written in the form
 \begin{eqnarray} \label{GLM}
y_i & = &  \sum_{j=1}^s \alpha_{ij} v^{n-j+1} +  \Delta t \sum_{j=1}^{s-1} \hat{a}_{ij} F( v^{n-j+1}) 
+\Delta t \sum_{j=1}^m a_{ij} F( y_j)  \nonumber \\
&& \hspace{3in}  \mbox{for} \; \; j=1,...,m \nonumber \\
v^{n+1} & =& \sum_{j=1}^s \omega_{j} v^{n-j+1} +
 \Delta t \sum_{j=1}^{s-1} \hat{b}_{j} F( v^{n-j+1}) 
+ \Delta t \sum_{j=1}^m b_{j} F( y_j) \; .
\end{eqnarray}
In all these cases, we aim to increase the order of the local truncation
error and therefore to increase the order of the global error.

The relationship between the local truncation error and the global error is well-known.
The Dahlquist Equivalence Theorem \cite{Suli2003} states that any zero-stable, 
consistent linear multistep method with  local truncation error $LTE^n = O(\dt^p)$ will have global error $O(\dt^p)$, 
provided that the solution  has at least $( p + 1)$ smooth derivatives. (Note that this statement depends on
the normalized definition of the local truncation error where $LTE^n = \frac{1}{\dt} \ste^n$. If the non-normalized definition is used,
then the global order is one lower than the local truncation error). 
   Indeed, all the familiar schemes for numerically solving  ODEs
have global errors that are  of the same order as their local truncation errors.
In fact, this relationship between local truncation error and global error is common not only  for ODE solvers, 
but is typically seen in   other fields in numerical mathematics, such as for finite difference schemes for partial 
differential equations (PDEs) \cite{gustafsson1995time,quarteroni2010numerical}.
  
It is, however, possible to devise schemes that have global errors that are {\em higher order} than the local truncation errors.
In particular, it was shown in  \cite{ditkowski2015high} that  finite difference schemes for PDEs can be constructed such that 
their convergence rates  are higher than the order of the  truncation errors. 
A similar approach was adopted in \cite{EISpaper1} for time evolution methods, where  we described the conditions under which
general linear methods (GLMs)  can  have global error is {\em one order higher} than the normalized local truncation error, and devise
a number of GLM schemes that feature this behavior.
These schemes achieve this higher-than-expected order by inhibiting the lowest order term in the local error from accumulating over time, 
and so we named them {\em  Error Inhibiting Schemes} (EIS). As we have since discovered, our EIS schemes have the same conditions as the  
quasi-consistent schemes of Kulikov, Weiner, and colleagues 
\cite{Kulikov2009,SoleimaniWeiner2017,WeinerSchmitt2009}.

In this paper, we extend the error inhibiting approach in two ways. First, we consider a broader formulation of the GLM that allows
for implicit methods as well as more general explicit methods than considered in \cite{EISpaper1}. Next, we show that by imposing
additional conditions on the methods we are able to precisely describe the coefficients of the error term and we then devise a post-processing
approach that removes this error term, thus obtaining a global error that is {\em two orders higher} than the normalized local truncation error.
We proceed to devise error inhibiting methods using this new EIS approach, and investigate their 
linear stability properties, strong stability preserving (SSP) properties, and computational efficiency.
Finally, we test these methods on a number of numerical examples to demonstrate their enhanced accuracy and, 
where appropriate, stability properties. In Appendix A we present an alternative approach to understanding the 
growth of the errors and describing the  coefficients of the error term. Note that the analysis in this paper is performed for the scalar 
equation \eqref{ODE}, but can be easily extended to the vector case, as shown in Appendix B.

\subsection{Preliminaries}
We begin with a scheme of the form
\begin{equation} \label{EISmethod}
V^{n+1} = \mD V^n +  \Delta t \mA F( V^n) +  \Delta t \mR F( V^{n+1}) , 
\end{equation}
where $V^n$ is a  vector of length $s$ that contains the numerical solution
at times  $\left( t_n+ c_j \Delta t \right)$ for $j=1,\ldots,s$:
\begin{equation}\label{multistep_v}
V^n = \left(v(t_n+ c_1 \Delta t)) ,  v(t_{n} + c_2 \Delta t)  , \ldots ,v(t_{n} + c_{s} \Delta t )  \right )^T .
\end{equation}
The function $F(V^n)$ is defined as the component-wise function evaluation on the vector $V^n$:
\begin{equation}\label{multistep_F}
F(V^n) = \left( F( v(t_{n} + c_1 \Delta t) ) , F( v(t_{n} + c_2 \Delta t) ),  \ldots , F(v(t_{n} + c_{s}  \Delta t ) ) \right )^T .
\end{equation}
For convenience,  $c_1 \leq c_2 \leq ... \leq c_s$. Also, we pick $c_s =0$ so that the final element in the vector 
approximates the solution at time $t_n$. This notation was chosen to match the notation widely used in
the field of general linear methods. To start these methods, we need to build an initial vector: to do so we 
artificially redefine the time-grid so that $t_0 :=t_0-  c_1 \Delta t$. Now the first element in the initial solution 
vector $V^0$  is $v(t_0+ c_1 \Delta t) = u_0$ and the remaining elements  $v(t_0+ c_j \Delta t) $
are computed using a highly accurate  one-step  method. Note that we consider only the case where $\dt$ is fixed.

\begin{rmk} The form  \eqref{EISmethod} includes  implicit schemes, as 
$V^{n+1}$ appears on both sides of the equation. However, if 
$\mR$ is strictly lower triangular the scheme is, in fact,  explicit.
If $\mR$ is a diagonal matrix then we can compute each element of the vector $V^{n+1}$
concurrently.
\end{rmk}

We define the corresponding exact solution of the ODE \eqref{ODE}:
\begin{equation}\label{multistep_u}
U^n = \left(  u(t_{n} + c_1 \Delta t)  , u(t_{n} + c_2 \Delta t) ,  \ldots ,u(t_{n} + c_{s}  \Delta t )  \right )^T ,
\end{equation}
with $F(U^n)$ defined similarly.

The {\em global error} of the method is defined as the difference between the
vectors of the exact and the numerical solutions at some time $t^n$
\begin{equation}\label{E-globalerror}
E^n = V^n - U^n .
\end{equation}

For the method  \eqref{EISmethod}, we define the normalized local truncation error $LTE^n$ 
and approximation error $ \ste^n$ by 
\begin{equation} \label{localerrors}
 \Delta t \; LTE^n = \ste^n = 
\left[ \mD U^{n-1} +  \Delta t \mA F( U^{n-1}) +  \Delta t \mR F( U^{n}) \right]  -U^{n} .
 \end{equation}
 Taylor expansions on \eqref{localerrors} allow us to compute these terms
\begin{equation} 
 \Delta t \; LTE^n = \ste^n = \sum_{j=0}^{\infty} \ste^n_j  \Delta t^{j}   =
 \sum_{j=0}^{\infty} \ste_j  \Delta t^{j}  \left. \frac{d^j u} {dt^j} \right|_{t=t_n}   
  \end{equation}
where the term $ \left. \frac{d^j u} {dt^j} \right|_{t=t_n} $ is the $j$th derivative of the solution at time $t= t_n$,
the value $\Delta t^{j} $ is the time-step raised to the power $j$, and the truncation error vectors $\ste_j$ 
are given by:
\begin{subequations} \label{tau_j_def}
\begin{eqnarray} 
\ste_0 & = &  \left(\mI - \mD\right) \ve \\
\ste_j &=&  \frac{1}{(j-1)!} \left(\frac{1}{j} \mD (\vc-\ve)^j  +\mA (\vc-\ve)^{j-1} +  \mR \vc^{j-1} -
 \frac{1}{j} \vc^j    \right) \\
 &&  \hspace{2.65in} \mbox{for j=1,2, . . .} \nonumber
  \end{eqnarray}
  \end{subequations}
Here,  $\vc$ is the vector of abscissas $(c_1, c_2, . . . , c_{s})^T$ and $\ve = (1,1, . . . , 1)^T$ is the vector of ones.
Terms of the form $\vc^j$ are understood component-wise  $\vc^j = (c_1^j, c_2^j, . . . , c^j_{s})^T$.

If the truncation error vectors $\ste_j$ satisfy the order conditions $\ste_j =0$ for $j=0, . . .,p$
then we have  local truncation error $  LTE^n  = O(\Delta t^{p} )$ and (correspondingly) approximation error 
 $\ste^n =  O(\Delta t^{p+1} )$.  
 In such a case  we typically observe a global order  of $E^n = O(\Delta t^{p} )$ at any time $t^n$. 
 Note once again that our definition of the  local truncation error $ LTE^n  $ is the normalized definition.
 According to this definition, we sww that we expect the global error to be of the {\em same}
 order as the local truncation error $LTE^n$. If the local truncation error $ LTE^n  $ is not normalized, then it
 is equal to $\ste^n$, and we expect the global error to be  {\em one order lower }
than $\ste^n$. We reiterate that we use the local truncation error $LTE^n$ to indicate the normalized definition,
and the approximation error $\ste^n$ to indicate the non-normalized definition.
 
 As shown in \cite{EISpaper1}, if we add conditions on $\mD, \mA$ (and $\mR$),
 we can find methods that exhibit a global order of $E^n = O(\Delta t^{p+1} )$ 
even though they only satisfy $\ste_j =0$ for $j=0, . . .,p$. 
In addition, as we show in Section \ref{sec:iEIS}, certain conditions on $\mD, \mA,\mR$ allow us to
precisely understand the form of the $\Delta t^{p+1} $ term in the global error, so we can design 
a postprocessor that gives us a numerical solution which has an error  of order $O(\Delta t^{p+2}) $, 
as we show in Section \ref{sec:postproc}.

 \section{Review of error inhibiting and related schemes}\label{sec:review}
The Dahlquist Equivalence Theorem \cite{Suli2003} states that any zero-stable, 
consistent linear multistep method with truncation error $O(\dt^p)$ will have global error $O(\dt^p)$, 
provided that the solution  has at least $( p + 1)$ smooth derivatives. 
All the one step methods and linear multistep methods  that are commonly used 
feature a global error  that is  of the same order as the normalized local truncation error. 
This is so common that the  order of the method is typically defined solely by the order conditions 
derived by Taylor series analysis of  the local truncation error. 
However, recent work \cite{EISpaper1} has shown that one can construct  general linear methods 
so that the accumulation of the local truncation error over time is controlled.
These schemes feature a global error that is one order higher than the local truncation error. 
In this section we review prior work on such error inhibiting schemes by Ditkowski and Gottlieb \cite{EISpaper1},
and show that similar results were obtained  in the work of Kulikov, Weiner, and colleagues 
\cite{Kulikov2009,SoleimaniWeiner2017,WeinerSchmitt2009}.
This body of work serves as the basis for our current work which will be presented in Section 3.

In \cite{EISpaper1}, Gottlieb and Ditkowski considered numerical methods of the form
\begin{equation} \label{EIS_multistep_2}
V^{n+1} = \mD V^n + \dt \mA F(V^n)
\end{equation}
(i.e. \ref{EISmethod} with $\mR=0$),
where  $\mD$ is a diagonalizable rank one matrix, whose non-zero eigenvalue is equal to one and its corresponding eigenvector is 
$  \left( 1, \ldots, 1\right)^T $. They showed that  if the method satisfies the order conditions
\begin{eqnarray} \label{OC}
 \ste_j =  0 , \; \; \; \mbox{ for} \; \; \; j=0, . . ., p 
 \end{eqnarray}
and the {\em error inhibiting condition}
\begin{eqnarray} \label{EIScondition}
 \mD\ste_{p+1}=0  
 \end{eqnarray}
hold, then the resulting global error satisfies $ E^n= O\left( \Delta t^{p+1} \right) . $
Our work in \cite{EISpaper1} showed how condition \eqref{EIScondition} mitigates the accumulation of  the truncation error,  
so we obtain a global error that is one order  {\em higher}   than predicted by the order conditions  
that describe the local truncation error. Furthermore, we developed several 
block one-step methods and demonstrated in numerical examples on nonlinear problems that these
methods have global error that is one order higher than the local truncation error analysis predicts.

As we later found out, although the underlying approach is different, the conditions  in \cite{EISpaper1}
are along the  lines of the theory of quasi-consistency  first introduced by Skeel in 1978 \cite{Skeel1978}. This work was then generalized and
further developed by Kulikov for Nordsiek methods in \cite{Kulikov2009}, as well as  Weiner and colleagues \cite{WeinerSchmitt2009}.
In \cite{WeinerSchmitt2009}, a theory of superconvergence for  explicit two-step peer methods was defined, 
and in \cite{SoleimaniWeiner2017} the theory was extended to implicit methods. 
In these papers, the authors showed that if the method satisfies order conditions \eqref{OC} and conditions that are equivalent to the EIS conditions
in \cite{EISpaper1} then the global error satisfies $ E^n= O\left( \Delta t^{p+1} \right) $. 

In \cite{KulikovWeiner2010}, \cite{KulikovWeiner2012}, and \cite{KulikovWeiner2018}, 
the authors show that by requiring the order conditions \eqref{OC} and EIS condition \eqref{EIScondition}
as well as additional conditions
\begin{subequations}  \label{KW2010}
\begin{eqnarray}
\mD\ste_{p+2} & =& 0 \\ 
\mD \mA \ste_{p+1} & =& 0 \\ 
\mD \mR \ste_{p+1} & =& 0 
\end{eqnarray}
\end{subequations}
the resulting global error satisfies $E^n= O\left( \Delta t^{p+1} \right) $
but, in addition, the form of the first surviving term in the global error (the vector multiplying $\Delta t^{p+1} $) is known
explicitly, and is leveraged for error estimation. 
While  the error inhibiting condition \eqref{EIScondition} requires the $p+1$ truncation error vector $\ste_{p+1} $ to live in the nullspace of 
$\mD$ the additional conditions  \eqref{KW2010} require  that the $p+2$ truncation error vector $\ste_{p+2} $ also lives in the nullspace of 
$\mD$, and that the  $p+1$ truncation error vector $\ste_{p+1} $ lives in the nullspace of $\mD \mA$ and $\mD \mR$.
These conditions inhibit the buildup of the truncation error so that we not only get a solution that is of order $\Delta t^{p+1} $, but also
 control the vector multiplying this error term. 

In this work we  can show that under {\em less restrictive conditions} than \eqref{KW2010} we can explicitly compute the 
form of the first surviving term in the global error. Furthermore, having this error term explicitly defined enables us to define
a postprocessor that allows us to extract a final-time solution that is accurate to two orders higher than predicted by truncation error
analysis alone.  We proceed to demonstrate these two facts and design a post-processor that allows us to extract a solution of $O(\dt^{p+2})$.

 \section{Designing explicit and implicit error inhibiting schemes with post-processing} \label{EISppTheory}
In this section we consider the improved error inhibiting schemes and the associated post-processor that allow us to recover order $p+2$ from  a scheme that would otherwise be only $p$th order accurate. 
In  Subsection \ref{sec:iEIS}, we show that the method must satisfy additional conditions so that the form of the final error is controlled and can be post-processed to extract higher order. In Subsection \ref{sec:postproc}, we show how this post-processing is done. Before we begin, we make some observations in Subsection
\ref{sec:essentials} that will be essential for Subsection \ref{sec:iEIS}. We note that our discussion in this section focuses, for simplicity of presentation, on a scalar 
function $F$. The vector case follows directly without difficulty, but the notation is more messy.  We  briefly discuss the extension to the vector case in Appendix \ref{sec:appendixB}.

\subsection{Essentials} \label{sec:essentials}
In this subsection we present two observations that will be used in the next subsection. 
The first observation uses the smoothness of the time evolution operator to bound the growth of the error 
at each step:
\begin{lem}  \label{lemma2}
The scheme \eqref{EISmethod} can be written in the form:
\begin{eqnarray} \label{EISmethod_10}
V^{n+1}  \,= \, \left( I -  \Delta t \mR F  \right)^{-1} 
\left(  \mD  +  \Delta t \mA F \right) V^n \, \equiv \, Q^n V^n
\end{eqnarray}
If the order conditions are satisfied to $j=0,...., p$, the scheme  \eqref{EISmethod}  (or equivalently \eqref{EISmethod_10})  is zero--stable, the operator $\left( I -  \Delta t \mR F  \right)^{-1} $ is bounded  and the (nonlinear) operator $Q^n$ is Lipschitz continuous in the sense that there is some constant $L>0$ such that
\begin{eqnarray} \label{EISmethod_20}
\left \|  Q^n V^n - Q^n U^n \right \|  \, \leq \, L\, \left \|  V^n -  U^n \right \|
\end{eqnarray}
 then the error accumulated in one step gets no worse than $O(\dt^{p+1}) $ i.e.
\begin{eqnarray} \label{EISmethod_30}
\left \| E^{n+1} \right\|   =  O \left(  \|  E^{k} \|  \right) ) + O( \Delta t^{p+1} ). 
\end{eqnarray}
\end{lem}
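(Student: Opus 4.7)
The plan is to derive \eqref{EISmethod_30} by writing the recurrence for the error $E^{n+1}$ in terms of $E^n$ and the local approximation error, then estimating each piece using the assumed hypotheses. The central idea is to exhibit the exact solution as a perturbed fixed point of the same operator $Q^n$ that advances the numerical solution, so that the difference collapses to a Lipschitz term plus a controllable residual.

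First I would rewrite \eqref{EISmethod} in resolvent form. Bringing the implicit term to the left and inverting $(I - \Delta t \mR F)$ (which exists and is bounded by hypothesis) gives the stated identity $V^{n+1} = Q^n V^n$. Applying the identical algebraic manipulation to the exact-solution identity that defines the approximation error in \eqref{localerrors}, namely
\begin{equation*}
\mD U^n + \Delta t \mA F(U^n) + \Delta t \mR F(U^{n+1}) = U^{n+1} + \ste^{n+1},
\end{equation*}
I obtain $U^{n+1} = Q^n U^n - (I - \Delta t \mR F)^{-1}\ste^{n+1}$, so $U^n$ satisfies the same scheme except for a local defect.

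Next I would subtract these two identities and apply the triangle inequality together with the Lipschitz bound \eqref{EISmethod_20}:
\begin{equation*}
\|E^{n+1}\| = \|Q^n V^n - Q^n U^n + (I-\Delta t \mR F)^{-1}\ste^{n+1}\| \le L\,\|E^n\| + \|(I-\Delta t \mR F)^{-1}\|\,\|\ste^{n+1}\|.
\end{equation*}
The first term on the right is $O(\|E^n\|)$ directly. For the second, the boundedness of $(I-\Delta t \mR F)^{-1}$ is given, and the Taylor expansion in \eqref{tau_j_def} together with the order conditions $\ste_j = 0$ for $j=0,\ldots,p$ shows that $\ste^{n+1}$ starts at $\ste_{p+1}\Delta t^{p+1}$ times a derivative of $u$, hence $\|\ste^{n+1}\| = O(\Delta t^{p+1})$ on any interval where the solution is sufficiently smooth. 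Combining these two estimates yields exactly \eqref{EISmethod_30}.

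The only genuinely delicate point is verifying that the assumed Lipschitz constant $L$ in \eqref{EISmethod_20} really is uniform in $n$ and $\Delta t$ on the interval of interest; this is where zero-stability of the scheme is used in tandem with the smoothness of $F$ and the bound on $(I-\Delta t \mR F)^{-1}$. Once that uniformity is granted — as the hypotheses do — the rest of the argument is a single application of the triangle inequality and the truncation-error expansion, so the proof is essentially a one-step accounting rather than a global induction (the global Gronwall-type step is deferred to subsequent results).
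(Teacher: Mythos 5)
Your proposal is correct and follows essentially the same route as the paper's proof: you recast the scheme as $V^{n+1}=Q^nV^n$, show the exact solution satisfies $U^{n+1}=Q^nU^n-\left(I-\Delta t\,\mR F\right)^{-1}\ste^{n+1}$, and subtract, using the Lipschitz bound on $Q^n$, the boundedness of the resolvent, and the order conditions to bound $\ste^{n+1}$ by $O(\Delta t^{p+1})$. Your closing remark about the uniformity of $L$ is a reasonable observation, but the paper simply takes this as a hypothesis exactly as you do.
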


\begin{proof} 
The exact solution satisfies the equation
\begin{equation} \label{EISmethod_Lemma1_10}
 U^{n+1}  \,= \,  Q^n U^n \, - \,  \left( I -  \Delta t \mR F  \right)^{-1}  \ste^{n+1}  \,. 
\end{equation}
By subtracting  \eqref{EISmethod_Lemma1_10} from  \eqref{EISmethod_10}  we obtain
\begin{align*} \label{EISmethod_Lemma1_20}
\left \| E^{n+1} \right\| & =\, \left \| V^{n+1} - U^{n+1}\right\|  \le   \left \| Q^n V^{n} - Q^n U^{n}\right\|  + 
   \left \| \left( I -  \Delta t \mR F  \right)^{-1}  \ste^{n+1}  \right\| 
  \\  \nonumber 
& \le   L \left \| V^{n} - U^{n}\right\| \, + \,  O( \Delta t^{p+1} )  \\  \nonumber 
&=  O \left(  \|  E^{k} \|  \right) ) + O( \Delta t^{p+1} ). 
\end{align*}
\end{proof}

\noindent The next observation will be needed for the expansion of $F(U^n + E^n)$. In the following, we assume that $F$ is a scalar function operating on the 
vector $V^n=U^n + E^n$. The extension of this to the case where $F$ is a vector function is straightforward, and will be addressed in Appendix \ref{sec:appendixB}.
\begin{lem}  \label{lemma1}
Given a smooth enough function $F$, we have 
\begin{equation}
F(U^n + E^n)   =  F(U^n)  + F_u^n E^n  + O(\dt)  O\left( \|E^n\| \right) ,
\end{equation}
where $F_u^n = \frac{\partial F}{\partial u}(u(t_n))$.
\end{lem}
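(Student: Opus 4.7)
The plan is to reduce the claim to a componentwise scalar Taylor expansion and then handle the resulting discrepancy in where $F'$ is evaluated. For each index $j \in \{1,\ldots,s\}$, the $j$-th component of the left-hand side is the scalar $F\bigl(u(t_n + c_j \Delta t) + E^n_j\bigr)$, and since $F$ is smooth enough, Taylor's theorem with Lagrange remainder gives
\[
F\bigl(u(t_n + c_j \Delta t) + E^n_j\bigr) \,=\, F\bigl(u(t_n + c_j \Delta t)\bigr) \,+\, F'\bigl(u(t_n + c_j \Delta t)\bigr)\, E^n_j \,+\, \tfrac{1}{2} F''(\xi_j)\, (E^n_j)^2
\]
for some intermediate value $\xi_j$. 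The first summand is exactly the $j$-th entry of $F(U^n)$, so the work that remains is to match the middle term to $F_u^n E^n_j$ and to absorb the quadratic remainder.

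For the middle term, I would use smoothness of both $F$ and $u$: since $u(t_n + c_j \Delta t) - u(t_n) = O(\Delta t)$ with $c_j$ bounded, a one-step Taylor (or mean value) expansion of $F'$ gives $F'\bigl(u(t_n + c_j \Delta t)\bigr) = F'\bigl(u(t_n)\bigr) + O(\Delta t) = F_u^n + O(\Delta t)$. Multiplying by $E^n_j$ produces $F_u^n E^n_j + O(\Delta t)\,O(\|E^n\|)$, which is of the desired form. For the quadratic remainder, bound $F''$ by a constant on a compact neighborhood of the exact and numerical trajectories to get $O((E^n_j)^2) = O(\|E^n\|^2)$; this is absorbed into $O(\Delta t)\,O(\|E^n\|)$ under the standing smallness assumption $\|E^n\| = O(\Delta t)$ that is automatic once the scheme is consistent (and is in fact already used implicitly by Lemma~\ref{lemma2}). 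Reassembling the component estimates into a vector identity yields the claim.

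The step I expect to be the main point of care, rather than a genuine obstacle, is the second one: one must recognize that $F_u^n$ is a single scalar evaluated at $t_n$, not the pointwise Jacobian at each staggered time $t_n + c_j \Delta t$, so the natural Taylor expansion at $U^n_j$ does not give $F_u^n$ for free. The $O(\Delta t)$ slack between $F'(u(t_n+c_j\Delta t))$ and $F_u^n$ is exactly what justifies the $O(\Delta t)\,O(\|E^n\|)$ error in the statement. The only other item worth flagging explicitly is the tacit hypothesis that $\|E^n\|$ is already small; without it, the quadratic remainder would have to appear separately as $O(\|E^n\|^2)$.
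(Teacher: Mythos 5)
Your proposal is correct and follows essentially the same route as the paper: a Taylor expansion of $F$ about $U^n$, a second expansion shifting the evaluation point of $F'$ from $u(t_n+c_j\Delta t)$ to $u(t_n)$ to produce the $O(\Delta t)\,O(\|E^n\|)$ slack, and absorption of the quadratic remainder under the tacit smallness of $E^n$. Your explicit flagging of the hypothesis $\|E^n\|=O(\Delta t)$ (which the paper uses silently when folding $O(\|E^n\|^2)$ into $O(\Delta t)\,O(\|E^n\|)$) is a welcome clarification but not a different argument.
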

\begin{proof}
We expand
\begin{align*}
F(U^n + E^n) & =    F(U^n) + 
\left(
\begin{array}{l}
F_u(u(t_n+ c_1 \Delta t)) e_{n+c_1} \\
F_u(u(t_n+ c_2 \Delta t)) e_{n+c_2} \\
\vdots \\
F_u(u(t_n+ c_{s} \Delta t)) e_{n+c_{s}} \\
\end{array}
\right)  + O\left( \|E^n\|^2 \right),
\end{align*}
where the error vector is $E^n = \left( e_{n+c_1} , e_{n+c_2} , . . . , e_{n+c_s} \right)^T$.

Using the definition $F_u^{n+c_j} = \frac{\partial F}{\partial u}(u(t_n+ c_j \Delta t)) $ we  re-write this as
\begin{align*}
F(U^n + E^n) 
& = &   F(U^n) + 
\left(
\begin{array}{cccc}
F_u^{n+ c_1 }  & 0 & \cdots & 0 \\
0 & F_u^{n+ c_2 } & \cdots & 0 \\
\vdots  & \vdots & \ddots & \vdots \\
0 & 0 & \cdots &  F_u^{n+ c_{s} } \\
\end{array}
\right) E^n + O\left( \|E^n\|^2 \right) .
\end{align*}
Each term can be expanded as
\begin{equation*}
 F_u^{n+ c_j  }  =  F_u(u(t_n+ c_j \Delta t))  = F_u(u(t_n)) +  c_j \Delta t F_{uu}(u(t_n))  + O(\dt^2)\\
 \end{equation*}
 so that we can say that 
 \begin{equation*}
 F_u^{n+ c_j  }  =  F_u(u(t_n+ c_j \Delta t))  = F_u(u(t_n)) +  C \Delta t   + O(\dt^2)\\
 \end{equation*}
so that
\begin{align*}
F(U^n + E^n)  & =    F(U^n)  + \left( F_u^n I + C \dt + O(\dt^2) \right) E^n + O\left( \|E^n\|^2 \right)  \\
& =  F(U^n)  + F_u^n E^n  + C \dt E^n  +   O(\dt^2)  O\left( \|E^n\| \right) + O\left( \|E^n\|^2 \right) .\\
& =  F(U^n)  + F_u^n E^n  + O( \dt) O\left( \|E^n\| \right) .\\
\end{align*}
\end{proof}

\begin{cor} \label{cor1}
If the error is of the form $E^n = O(\dt^{p+1})$, then
\begin{equation}
 F(U^n + E^n)  = F(U^n)  + F_u^n E^n  + O \left( \dt^{p+2}  \right).
 \end{equation}
\end{cor}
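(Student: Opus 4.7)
The plan is to deduce this corollary directly from Lemma \ref{lemma1} by substituting the assumed size of $E^n$ into the remainder term. Lemma \ref{lemma1} already gives the expansion
\[
F(U^n + E^n) = F(U^n) + F_u^n E^n + O(\dt)\, O\!\left(\|E^n\|\right),
\]
so the only work left is to simplify the remainder under the hypothesis $E^n = O(\dt^{p+1})$.

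First I would invoke Lemma \ref{lemma1}, which is applicable since we assume $F$ is smooth enough. Then I would substitute the norm bound $\|E^n\| = O(\dt^{p+1})$ into the factor $O(\|E^n\|)$ appearing in the remainder. Multiplying by the $O(\dt)$ factor gives a combined remainder of order $O(\dt) \cdot O(\dt^{p+1}) = O(\dt^{p+2})$, which is exactly the claimed bound. The linear term $F_u^n E^n$ is kept explicitly since it is already linear in $E^n$ and will be needed by the downstream analysis; note that we do not fold it into the error because it is not small enough on its own (it is only $O(\dt^{p+1})$, not $O(\dt^{p+2})$).

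There is really no obstacle here — the corollary is a one-line consequence of Lemma \ref{lemma1} combined with the hypothesis on the size of $E^n$. The only mild subtlety worth flagging is the implicit constant: the $O(\dt)\, O(\|E^n\|)$ notation in Lemma \ref{lemma1} absorbs constants depending on $F_{uu}$ and on the abscissas $c_j$, and in passing to $O(\dt^{p+2})$ we are implicitly assuming these constants are uniformly bounded on the time interval of interest, which follows from smoothness of $F$ and of the exact solution $u(t)$. With that understood, the proof is complete in a single display.
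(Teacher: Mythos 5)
Your proposal is correct and matches the paper's (implicit) argument exactly: the paper states Corollary \ref{cor1} without a separate proof, treating it as the immediate consequence of Lemma \ref{lemma1} obtained by substituting $\|E^n\| = O(\dt^{p+1})$ into the remainder $O(\dt)\,O(\|E^n\|)$, which is precisely what you do. Your remark about the quadratic term and the uniform boundedness of the constants is a fair (and correct) observation, but no different route is taken.
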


\subsection{Improved error inhibiting schemes that have $p+1$ order} \label{sec:iEIS}

In this section we consider a method  of the form   \eqref{EISmethod}, 
where $\mD$ is a rank one matrix  that satisfies the consistency condition $\mD \ve =\ve$,
and $\mD$, $\mA$ and $\mR$ are matrices that satisfy the order conditions
\begin{equation}  \label{OC-tau}
\ste_j =  0 , \; \; \; \mbox{ for} \; \; \; j=1, . . ., p ,
\end{equation}
and the EIS+ conditions
\begin{subequations}  \label{EIS2conditions}
\begin{eqnarray}
\mD\ste_{p+1}=0  \label{con1} \\
\mD\ste_{p+2}=0  \label{con2}\\
\mD(\mA+\mR)   \ste_{p+1}=0 \label{con3}.
\end{eqnarray}
\end{subequations}

We initialize the method with a numerical solution vector $V^0$ that is accurate
 enough so that we ensure that the error is negligible. This means that 
taking one step forward using   \eqref{EISmethod} has no accumulation error, only the truncation error:
\[ V^1  =  U^1 +\ste^1 = U^1 +  \Delta t^{p+1}  \ste^1_{p+1} +  \Delta t^{p+2} \ste^1_{p+2}  + O( \Delta t^{p+3} ) ,\]
and
\[ E^1  =  \Delta t^{p+1}  \ste^1_{p+1} +  \Delta t^{p+2} \ste^1_{p+2}  + O( \Delta t^{p+3} ) .\]
The conditions $\mD  \ste_{p+1} = 0$ and  $\mD  \ste_{p+2} = 0$ from \eqref{EIS2conditions} mean that
  \[ \mD E^1 = O( \Delta t^{p+3} ) .\] 
  
  From Lemma \ref{lemma2} we know that the error that accumulates in only one step is no worse than order 
  $\dt^{p+1}$ so that $O(E^{k+1}) = O(E^k) + O(\dt^{p+1})$ and thus we know that
\[ V^2  =   U^2 + O(\Delta t^{p+1} )  .\]
We use these facts about $V^1$ and $V^2$ to write:
\begin{eqnarray*}
 V^2 & = & \mD V^1 + \Delta t \mA F(V^1) + \Delta t \mR F(V^{2})  \\ 
& = & \mD \left(U^1 + E^1  \right) +  \Delta t \mA F\left(U^1 + E^1 \right) 
  + \Delta t \mR F\left(U^{2}+ E^2   \right) \\ 
& = & \mD U^1  +  \Delta t \mA F(U^1)  +  \Delta t \mR F(U^2)  +\mD E^1  \\
&  +&   \Delta t \mA  F_u^1 E^1 + \Delta t \mA  F_u^2 E^2  +O \left( \dt^{p+3}  \right) 
\end{eqnarray*}
Note that the  first three terms are, by the definition of the local truncation error
\[  \mD U^1 +  \Delta t \mA F(U^1) +  \Delta t \mR F(U^2)  = U^2 + \dt^{p+1}  \ste^2_{p+1} + 
\dt^{p+2} \ste^2_{p+2}  +  O(\dt^{p+3}),\]
so that
\[ V^2  =   U^2 + \ste^2 +  C \dt^{p+2} + O(\dt^{p+3}),\]
which means that \[ E^2 = \ste^2 +  C \dt^{p+2} + O(\dt^{p+3}).\]
Using this more refined understanding of the order term, we repeat the process above to write:
\begin{eqnarray*}
 V^2 & = & \mD \left(U^1 + E^1  \right) +  \Delta t \mA F\left(U^1 + E^1 \right) 
  + \Delta t \mR F\left(U^{2}+ E^2   \right) \\ 
   & = & \left[ \mD U^1 +  \Delta t \mA F(U^1) +  \Delta t \mR F(U^2) \right] +  \mD E^1 \\
   &+& \dt \mA E^1 F_u^1+ \dt \mR E^2 F_u^2    + C \dt^2 E^n  +   O(\dt^3)  O\left( \|E^n\| \right) \\
     & = & U^2 + \ste^2 + \dt \mA \ste^1 F_u^1 + \dt \mR \ste^2 F_u^2  + C \dt^{p+3} + O\left(  \dt^{p+4} \right) 
 %
 \end{eqnarray*}
 where $C$ is a constant that depends on the truncation error vectors and the values of higher derivatives of a solution, and
 whose value is different at each line.
This means that
 \begin{eqnarray} \label{basecase}
 E^2 &=& \dt^{p+1}  \ste_{p+1}^2 + \dt^{p+2}  \ste_{p+2}^2 
  +  \dt^{p+2} \left(  \mA + \mR \right)  \ste_{p+1}^2 F_u^2 \\
  &+&   2 C_2 \dt^{p+3}  + O(\dt^{p+4} )  ,  \nonumber
  \end{eqnarray}
  and so conditions \eqref{EIS2conditions} imply that 
   \[ \mD E^2  =  \tilde{C} \dt^{p+3} +   O(\dt^{p+4} ). \]

\smallskip

This precise form of $E^2$ can be extended to the error at any time-level $t_n$.
In the following theorem we show that the  conditions  \eqref{EIS2conditions} 
 allow us to determine  the precise form of the $\dt^{p+1}$ term in the global error $E^n$ 
resulting from the  scheme \eqref{EISmethod}. In the next section we use this precise form
to extract higher order accuracy by post-processing.

\begin{thm} 
For a method of the form   \eqref{EISmethod}, if we
 choose a rank one matrix $\mD$ such that the consistency condition $\mD \ve =\ve$ is satisfied, 
and matrices $\mA$ and $\mR$ such that the order conditions \eqref{OC-tau} are satisfied 
and furthermore the EIS conditions \eqref{EIS2conditions} hold, then the resulting global error at any time
$t_n$  has the form
\begin{equation} \label{Error_form}
E^n= \Delta t^{p+1}   \ste^n_{p+1} +  \Delta t^{p+2}   \ste^n_{p+2}  
+ \Delta t^{p+2} (\mA+\mR)  \ste^n_{p+1} F_u^n 
+ C_n n   \Delta t^{p+3} +  O( \Delta t^{p+3}),
\end{equation}
where $C_n$ is a constant.
\end{thm}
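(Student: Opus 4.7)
The plan is to proceed by induction on $n$, taking the stated formula for $E^n$ as the induction hypothesis. The base case $n=2$ has essentially been established in the computation immediately preceding the theorem: the derivation culminating in equation \eqref{basecase} yields exactly the claimed form with $C_n n \Delta t^{p+3} = 2 C_2 \Delta t^{p+3}$, and from it we also extract $\mD E^2 = O(\Delta t^{p+3})$, which is the key input needed to continue the recursion.

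For the inductive step, assume the formula holds at $t_n$. I would substitute $V^n = U^n + E^n$ and $V^{n+1} = U^{n+1} + E^{n+1}$ into the scheme \eqref{EISmethod} and apply Corollary \ref{cor1} to each evaluation of $F$, which is valid because $E^n = O(\Delta t^{p+1})$ by hypothesis and $E^{n+1} = O(\Delta t^{p+1})$ by Lemma \ref{lemma2}. After using the definition of the local truncation error to replace $\mD U^n + \Delta t \mA F(U^n) + \Delta t \mR F(U^{n+1})$ with $U^{n+1} + \ste^{n+1}$ and inverting the implicit part via the Neumann expansion $(I - \Delta t \mR F_u^{n+1})^{-1} = I + O(\Delta t)$, the recurrence collapses to
\begin{equation*}
E^{n+1} = \ste^{n+1} + \mD E^n + \Delta t \mA F_u^n E^n + \Delta t \mR F_u^{n+1} \ste^{n+1} + O(\Delta t^{p+3}).
\end{equation*}
The decisive step is then to evaluate $\mD E^n$ using the induction hypothesis: the three EIS+ conditions \eqref{con1}, \eqref{con2}, \eqref{con3} annihilate, respectively, the $\Delta t^{p+1}$, $\Delta t^{p+2}$, and cross terms in $E^n$, so that $\mD E^n = O(\Delta t^{p+3})$. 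Expanding $\ste^{n+1}$ in its Taylor series, replacing $F_u^n$ by $F_u^{n+1}$ and $\ste^n_{p+1}$ by $\ste^{n+1}_{p+1}$ up to $O(\Delta t)$ discrepancies, and combining the two remaining cross terms into $\Delta t^{p+2}(\mA+\mR)\ste^{n+1}_{p+1}F_u^{n+1}$, I would recover exactly the stated form at $t_{n+1}$.

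The main obstacle lies in the bookkeeping of the $\Delta t^{p+3}$ terms that make up the coefficient $C_n$. Each step contributes bounded $\Delta t^{p+3}$ remainders from three distinct sources: the Taylor expansion of $F$ around $U^n$ via Corollary \ref{cor1}, the smoothness corrections needed to align $F_u^n$ with $F_u^{n+1}$ and $\ste^n_{p+1}$ with $\ste^{n+1}_{p+1}$, and the Neumann inversion of the implicit factor. To show that these add up to a term of the form $C_n n \Delta t^{p+3}$ rather than anything exponentially amplified, I would invoke the Lipschitz bound \eqref{EISmethod_20} and zero-stability from Lemma \ref{lemma2}, which guarantee that the per-step accumulation remains uniformly bounded on any fixed time interval. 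Establishing uniform boundedness of the coefficient sequence $\{C_n\}$ is the most delicate part of the argument, and is what I would expect to have to write out most carefully to make the proof fully rigorous.
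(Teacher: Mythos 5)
Your proposal is correct and follows essentially the same route as the paper: induction from the base case \eqref{basecase}, Lemma \ref{lemma2} and Corollary \ref{cor1} to control the one-step error and linearize $F$, the three EIS+ conditions to annihilate $\mD E^n$, the smoothness relations $\ste^{n+1}_{p+1}=\ste^{n}_{p+1}+O(\dt)$ and $F_u^{n+1}=F_u^n+O(\dt)$ to merge the cross terms into $\Delta t^{p+2}(\mA+\mR)\ste^{n+1}_{p+1}F_u^{n+1}$, and a final bounded-accumulation argument for the $C_n n\Delta t^{p+3}$ term. The only presentational difference is that you resolve the implicit dependence on $E^{n+1}$ by a Neumann expansion of $(I-\Delta t\,\mR F_u^{n+1})^{-1}$ (as the paper does in its Appendix A analysis), whereas the main-text proof instead uses a two-pass bootstrap (first establishing $E^{n+1}=\Delta t^{p+1}\ste^{n+1}_{p+1}+O(\Delta t^{p+2})$, then re-substituting); the two devices yield the same recurrence.
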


\begin{proof}  We showed above that the base case  \eqref{basecase} 
has the correct form \eqref{Error_form}.
We proceed by induction: assume that the numerical solution at time $t_k$ has an
error of the form \eqref{Error_form}:
 \[E^k = \Delta t^{p+1}  \ste^k_{p+1} +  \Delta t^{p+2} \ste^k_{p+2}  +  \Delta t^{p+2}  ( \mA  + \mR)  \ste^k_{p+1}  F_u(U^{k}) + C_k k \dt^{p+3}  + O( \dt^{p+3} ) . \]
  The form of the error  means that, using conditions \eqref{EIS2conditions}, we have
  \[\mD E^k =  C k \dt^{p+3}  + O( \dt^{p+3} ) .\]
 
We wish to show that under the conditions in the theorem,  $E^{k+1}$ has a similar form. We split
our argument  into two steps:  \\
\noindent{\bf (1) First step:} We know from Lemma \ref{lemma2} that the  
error that accumulates in only one step is no worse than order $\dt^{p+1}$ so that since
$ E^{k} =C  \Delta t^{p+1} + O( \Delta t^{p+1} )$ we know that we also have
$E^{k+1}  =  \tilde{C}  \Delta t^{p+1} + O( \Delta t^{p+1} ).$  Now we look at the evolution of the error from one time-step to the next:
 \begin{eqnarray*}
 V^{k+1} & = & \mD V^k + \Delta t \mA F(V^k) + \Delta t \mR F(V^{k+1})  \\ 
& = & \mD \left(U^k + E^k  \right) +  \Delta t \mA F\left(U^k + E^k \right) 
  + \Delta t \mR F\left(U^{k+1}+ E^{k+1} \right) \\ 
& = & \left[ \mD U^k +  \Delta t \mA F(U^k) +  \Delta t \mR F(U^{k+1}) \right]
   + \mD E^k \\
   & + &  \Delta t \mA F_u^k E^k +  \Delta t \mR F_u^{k+1} E^{k+1} 
   +  C \dt^2  O( \|E^k\| )  + O( \Delta t^{p+3} ). \\
   & = & U^{k+1} + \ste^{k+1} + C \Delta t^{p+2} + O(\Delta t^{p+3} )
  \end{eqnarray*}
 so that
\begin{eqnarray*}
E^{k+1}    & = &   \Delta t^{p+1}  \ste^{k+1}_{p+1} + \tilde{C} \Delta t^{p+2} +  O(\Delta t^{p+3} ).
   \end{eqnarray*}
   
\noindent{\bf (2) Second step:}
  This analysis allowed us to obtain a more precise understanding of the growth of the error 
  over one step from $V^k$ to $V^{k+1}$. The error is
  still of order $O(\Delta t^{p+1})$, but the leading term in the error is now explicitly defined. 
  With this new understanding of $V^{k+1}$,  we repeat the analysis:
   \begin{eqnarray*}
 V^{k+1} & = & \mD V^k + \Delta t \mA F(V^k) + \Delta t \mR F(V^{k+1})  \\ 
& = & \mD \left(U^k + E^k \right) +  \Delta t \mA F\left(U^k + E^k  \right) 
  + \Delta t \mR F\left(U^{k+1}+   E^{k+1} \right) \\
  & = & \left[ \mD U^k +  \Delta t \mA F(U^k) +  \Delta t \mR F(U^{k+1}) \right] +  \mD E^k \\
  &+&  \Delta t \mA E^k F_u^k +  \Delta t \mR E^{k+1} F_u^{k+1} + C \Delta t^2 O(\|E^k\|) + O(\dt^3) O( \|E^k\| )\\
  & = & U^{k+1} + \ste^{k+1} +  \mD E^k +  \Delta t \mA E^k F_u^k +  \Delta t \mR E^{k+1} F_u^{k+1} + C \Delta t^2 O(\|E^k\|)  + O(\Delta t^3) O( \|E^k\| ) \\
  & = & U^{k+1} + \ste^{k+1} + C k \dt^{p+3} +  \Delta t \mA E^k F_u^k +  \Delta t \mR E^{k+1} F_u^{k+1} +    O( \dt^{p+3} ).
   \end{eqnarray*}
  We now use what we know about $E^k$ from the base-case and $E^{k+1}$ from step 1:
     \begin{eqnarray*}
 V^{k+1} & = & U^{k+1} + \ste^{k+1} + C k \dt^{p+3} +   \Delta t \mA \left(  \Delta t^{p+1}  \ste^k_{p+1}   \right) F_u^k \\
 &+&   \Delta t \mR \left(    \Delta t^{p+1}  \ste^{k+1}_{p+1}  \right) F_u^{k+1} + \tilde{C} \Delta t^{p+3}  +   O( \dt^{p+3} ).\\
 & = & U^{k+1} + \ste^{k+1}  +   \Delta t^{p+2} \mA  \ste^k_{p+1}  F_u^k +  \Delta t^{p+2} \mR \ste^{k+1}_{p+1} F_u^{k+1} \\
 &+& C_k k \dt^{p+3}+  \hat{C}\Delta t^{p+3}   +   O( \dt^{p+3} ). \\
 & = & U^{k+1} + \Delta t^{p+1}   \ste^{k+1}_{p+1} +  \Delta t^{p+2}   \ste^{k+1}_{p+2}  +   \Delta t^{p+2} (\mA+\mR)   \ste^{k+1}_{p+1}  F_u^{k+1} \\
 &+&  C_k k \dt^{p+3}+  C \Delta t^{p+3}   +   O( \dt^{p+3} ), 
   \end{eqnarray*}
using the fact that  
 \[  \ste^{k+1}_{p+1}  =  \ste^{k}_{p+1} + O( \dt) \; \; \;  \mbox{and} \; \; \;  F_u^{k+1} = F_u^{k} + O( \dt),\]
 and that 
  \[ \ste^{k+1} = \Delta t^{p+1}   \ste^{k+1}_{p+1} +  \Delta t^{p+2}   \ste^{k+1}_{p+2} + \tilde{C} \dt^{p+3}+ O(\Delta t^{p+4}).\]
 Clearly, each time-step accumulates a few terms of the form $  \dt^{p+3}$, so that
   \begin{eqnarray*}
     V^{k+1}  &=& U^{k+1} + 
       \Delta t^{p+1}   \ste^{k+1}_{p+1} +  \Delta t^{p+2}   \ste^{k+1}_{p+2} 
      +  \Delta t^{p+2} \left( \mA + \mR \right) \ste^{k+1}_{p+1}   F_u^{k+1}  \\
      & +&  (C_k k + C )  \dt^{p+3} +O \left( \dt^{p+3} \right).
            \end{eqnarray*}
Setting $C_{k+1} = \max\{ C_k, C \}$ we obtain the desired result.
Note that the constants $C_k  $ are uniformly bounded, since they are constants that depend on some combinations of the truncation error vectors $\ste_j$, the 
higher order derivatives of $u$ at times $(t_0, t_n)$, and the bounded matrix operators in the problem.
  \end{proof}

\bigskip

The conditions $\ste_j =0$  for $j=1, . . ., p$ give us a method of order $p$, while the additional condition $\mD\ste_{p+1}=0$
allow us to get a $p+1$ order method, just as in previous EIS methods. 
Adding the conditions  $\mD\ste_{p+2}=0$ and $\mD(\mA+\mR)   \ste_{p+1}=0$ does not give us a higher order scheme. However, it allows us to 
control the growth of the error so that we can identify the error terms as in \eqref{Error_form}. 
This, in turn, allows us to  design a postprocessor that extracts order $p+2$, as we show in the next subsection.

\subsection{Post-processing to recover $p+2$ order} \label{sec:postproc}

In Theorem  1 we showed that the  at every time-step $t_n$ the
error $E^n$ has the form \eqref{Error_form}. The leading order term $\Delta t^{p+1}   \ste^n_{p+1}$
 can be filtered at the end of the computation in a post-processing stage as long as
 $\ste^n_{p+1}$ lies in a subspace which is ``distinct enough'' from the exact solution.

First we note that for  the error to be of order $\dt^{p+2}$  the exact solution of the ODE must have 
at least $p+2$ smooth derivatives, i.e. $u(t) \in C^{p+2}$. 
Therefore, up to an error of order $p+2$, we can expand the solution as a polynomial of degree $p+1$
 \begin{equation}\label{EIS_multistep_vc_error_10}
u(t) \,=\, \sum_{j=0}^{p+1} a_j (t-t_{\nu})^j \,+\, O(\dt^{p+2})
\end{equation}
at any point in the interval $[t_{\nu} - \dt, t_{\nu} + \dt]$.
It is then reasonable to expect  that the numerical solution, $V^n$, can be similarly expanded. 
Our post-processor is  built on these observations. 

To build the post-processor, we define the time vector
to be  the  temporal grid points in the last two computation steps:
 \begin{eqnarray}\label{EIS_multistep_vc_error_20}
\tilde{\vt} &=& \left (  t_{n-1} + c_1 \dt,       ...,   t_{n-1} + c_{s} \dt,  
 t_{n} + c_1 \dt,       ...,   t_{n} + c_{s} \dt
 \right )^T \\
 & = & \left ( 
\begin{array}{c}
\ve t_n + (\vc- \ve)\dt \\
\ve t_n + \vc \dt 
\end{array} 
\right ) . \nonumber
\end{eqnarray}
The last term in \eqref{EIS_multistep_vc_error_20} states that the vectors $\ve t_n + (\vc- \ve)\dt$ and 
$\ve t_n + \vc \dt$   are concatenated into one $2s$ long vector. 
Correspondingly, we let
 \begin{equation*} 
 \tilde{V}^n \,=\, \left ( 
\begin{array}{c}
V^{n-1} \\ 
V^{n}
\end{array} 
\right) \; \; \; \; \; \mbox{and} \; \; \; \; \; 
\tilde{U}^n
\,=\, \left ( 
\begin{array}{c}
U^{n-1} \\ 
U^{n}
\end{array} 
\right)
\end{equation*}
i.e., as for the temporal grid,  the numerical solutions $V^{n-1}$ and  $V^n$ are concatenated into 
one long vector, and the same is done for the exact solutions $U^{n-1}$ and  $U^n$. 
Similarly we define a concatenation of the leading truncation error terms
  \begin{equation}\label{EIS_multistep_vc_error_40}
\tilde{\ste}= \left(
\begin{array}{c}
 \ste_{p+1} \\ 
 \ste_{p+1}
\end{array} 
 \right ),
\end{equation}
where $\ste_{p+1}$ was defined in \eqref{tau_j_def}.
Note that by \eqref{Error_form},
\begin{equation}\label{EIS_multistep_vc_error_43}
\tilde{E}^n= \left(
\begin{array}{c}
    E^{n-1} \\
      E^n 
\end{array} 
 \right )  =   \tilde{\ste} \Delta t^{p+1} \left. \frac{d^j u} {dt^j} \right|_{t=t_n}  + O\left ( \Delta t^{p+2}\right ).
\end{equation}

\bigskip

Let $P_0(t-t_n), ..., P_{p+1}(t-t_n)$ be a set of polynomials of degree less or equal to $p+1$ such that 
\begin{equation}\label{EIS_multistep_vc_error_50}
\text{span} \left \{ P_0(t-t_n), ..., P_{p+1}(t-t_n) \right \} \,=\, \text{span} \left \{1, (t-t_n), ...,  (t-t_n)^{p+1} \right \}
\end{equation}
and that  the vectors $\vP_0, ..., \vP_{p+1}$ are the projections of $P_0(t-t_n), ..., P_{p+1}(t-t_n)$ onto the temporal grid points 
$\tilde{\vt} $.

Define the matrix $T \in {\mathbb C}^{2s \times 2s}$  as follows; The first column is $\tilde{\ste}$, the next
$p+2$ columns are  $\vP_0, ..., \vP_{p+1}$,  and the remaining  $2s-(p+3)$ columns  are selected such that 
$T$ can be inverted. A convenient way to accomplish this is to define the Vandermonde interpolation matrix
on the points in the $2s$-length vector $\tilde{\vt} $ and replace the highest order column 
(in {\sc Matlab} the first column) by $\tilde{\ste}$.

The post-processing filter is then given by 
 \begin{equation}\label{EIS_multistep_vc_error_60}
\Phi = T \, \text{diag} \left(0, \underbrace{1,...,1}_{p+2}, \underbrace{*,...,*}_{2s-(p+3)} \right)\,T^{-1}
\end{equation}
Where we select $*$ to be either 1 or 0, if it is desired to keep this subspace or eliminate it, respectively. 
Note that, by construction, the matrix $\Phi$ multiplied to the error vector $\tilde{E}^n$
eliminates the leading order term $\ste_{p+1}$ so that 
 \begin{equation}\label{EIS_multistep_vc_error_70}
\Phi \tilde{E}^n =    O\left ( \Delta t^{p+2}\right ).
\end{equation}
Also by construction, the matrix $\Phi$ includes the Vandermonde interpolation matrix up to order $p+1$
so that if it operates on a polynomial, it will replicate the polynomial exactly, up to order $p+1$.
Using  the polynomial expansion of $\tilde{U}^n$ assumed in \eqref{EIS_multistep_vc_error_10}
we then obtain
 \begin{equation}\label{EIS_multistep_vc_error_80}
\Phi \tilde{U}^n =   \tilde{U}^n +  O\left ( \Delta t^{p+2}\right ),
\end{equation}
and therefore the numerical solution obtained after the post-processing stage  is of order  $p+2$:
 \begin{equation}\label{EIS_multistep_vc_error_90}
\Phi \tilde{V}^n =   \Phi \left ( \tilde{U}^n + \tilde{E}^n \right ) =  \tilde{U}^n +  O\left ( \Delta t^{p+2}\right ).
\end{equation}

\begin{rmk}
We note that  is important that the leading term of the error does not live in the span of these polynomials.
In other words, we need $ \tilde{\ste}   \notin \textup{span} \left \{ \vP_0, ..., \vP_{p+1} \right \}$.
It is also important that the leading term of the error does not live too close to
the space spanned by these polynomials.
The coefficient in the $ O\left ( \Delta t^{p+2}\right )$ term of \eqref{EIS_multistep_vc_error_90}
depends on the norm of the matrix $\Phi$. This norm, in turn,
 is related to how far $ \tilde{\ste}  $ is  from
 its projection into the  subspace $ \textup{span} \left \{ \vP_0, ..., \vP_{s+1} \right \}$. 
 To avoid numerical instability,  it should be verified during the design stage
 that $ \| \Phi \|$   is not ``too large''.
\end{rmk}

Despite the verbose description above, the design of the post-processor matrix is computationally simple.
The following {\sc Matlab} code shows how this postprocessing matrix $P$ is computed:
\begin{verbatim}
m = 2;                             % # of intervals 
TAU = repmat(tau(:,p-1),m,1) ;     % truncation error vector
gp = c-(m-1:-1:0);                 % Gridpoints
S = (vander(gp(:)));               % Polynomial basis
S(:,1) = TAU;                      % Put TAU into basis
DD = diag([0,ones(1,length(S)-1)]);% Zero out truncation error
Phi = S*DD*inv(S);                 % Build post-processing matrix
\end{verbatim}

\smallskip

In the preceding discussion we assumed that there are enough points in time in each interval to produce a high
enough order polynomial: in other words, we assumed that
\[ 2 s \geq p+3.\] When this is not correct (e.g. when we have a scheme with $s=2$ that has
$p=2$, or a scheme with $s=3$ that has $p=4$) we need to use three repeats of each vector so that 
\[ 3 s \geq p+3.\]  In the {\sc Matlab} code  above this is equivalent to choosing $m=3$.

  \section{New error inhibiting schemes with post-processing}
Using MATLAB we coded an optimization problem \cite{EISgithub}
that finds the coefficients $\mD, \mA, \mR$ that satisfy the consistency and order conditions \eqref{OC-tau} 
and the error inhibiting conditions \eqref{EIS2conditions} while maximizing such properties as the 
linear stability region or the strong stability preserving coefficient \cite{SSPbook2011}.
In this section we present some  new methods obtained by the optimization code. 
These methods are error inhibiting (i.e. they satisfy \eqref{con1}) and one 
order higher can be extracted by postprocessing (i.e. they satisfy \eqref{con2} and \eqref{con3} as well).

\subsection{Explicit Methods}
In this section we present the explicit error inhibiting methods that can be post-processed. 
For each method we present the coefficients $ \mD, \mA, \mR$. We also give values of the 
abscissas and the truncation error vector that must be used to construct the 
postprocessor. We denote an explicit $s$-step method that satisfies the conditions \eqref{EIS2conditions} 
and can be postprocessed to order $p$ as eEIS+($s$,$p$). If, in addition, the method is
also strong stability preserving, we denote it eSSP-EIS+($s$,$p$).

\smallskip

\noindent{\bf Explicit error inhibiting method eEIS+(2,4):}
 This explicit two stage error inhibiting method is fourth
order if it is post processed, otherwise it gives third order accurate solutions.
The coefficients are:
\[
D =\frac{1}{2}   \left( \begin{array}{cc}
1  & 1 \\
1 & 1 \\
\end{array} \right), \; \; \; 
A =  \frac{1}{12} \left( \begin{array}{rr}  
- 7  & 17 \\
7  & - 5\\
\end{array} \right), \; \; \; 
R = \left( \begin{array}{cc}
0 & 0 \\
1  &0  \\
\end{array} \right).
\]
The abscissas are $c_1= -\frac{1}{3}, c_2 =0.$ The truncation error vector is 
$  \ste_3 = \frac{55}{324}\left( 1  , -1 \right)^T.$
The imaginary axis stability for this method is $(-0.6452,0.6452)$.
To post-process this method we  take a linear combination of several solutions
\[ \hat{v}^n = w_1 v^{n-2+c_1 } + w_2 v^{n-2} + w_3 v^{n-1+c_1} + w_4 v^{n-1} + w_5 v^{n+c_1} + w_6 v^{n} \]
with weights
\[ w_1= \frac{5}{108}, \; \; 
w_2= - \frac{7}{54},  \; \; 
  w_3= \frac{35}{108}, \; \;
  w_4= \frac{35}{108}, \; \;
   w_5=  \frac{7}{54},  \; \;        
     w_6= \frac{103}{108} .\]

These methods were constructed to maximize the linear stability region. 
In Figure \ref{stability_regions} we show the stability regions of these methods, and compare them 
to stability regions of the Adams-Bashforth linear multistep methods  with corresponding orders given in \cite{HairerNorsettWanner}.
When comparing these regions of stability, we need to keep in mind that while the linear multistep methods
require only one function evaluation per time step, the general linear methods have intermediate stages at
which the function evaluation is computed, which increases the computational cost to get to the final time.
For example, when looking at the   eEIS+(2,4)  method we need to compute two stages,
so that we require two function evaluations at each  time-step. 
For a fair comparison, the stability regions need to be scaled as well: for this reason, in Figure 
\ref{stability_regions}  the figure presented is visually scaled by the length of $V^n$, in that while for the  eEIS+(2,4) method
we show the axes as $(-1,1)$, for the fourth order Adams-Bashforth method the axes are $(-0.5,0.5)$.
Similarly, the axes for the eEIS+(3,6) are $(-1,1)$ while for the corresponding sixth order Adams-Bashforth method the axes are $(-0.33,0.33)$,
and for the eEIS+(5,7) the axes are $(-2.5,2.5)$ and the corresponding seventh order Adams-Bashforth method has axes $(-0.5,0.5)$.
(Note that for an explicit method,  we also may have some elements of  $F(V^{n+1})$ to compute at an intermediate stage, 
but these will be needed anyway at the next time-step so they do not add to the computational cost.)

\begin{figure}
 \includegraphics[width=.351\textwidth]{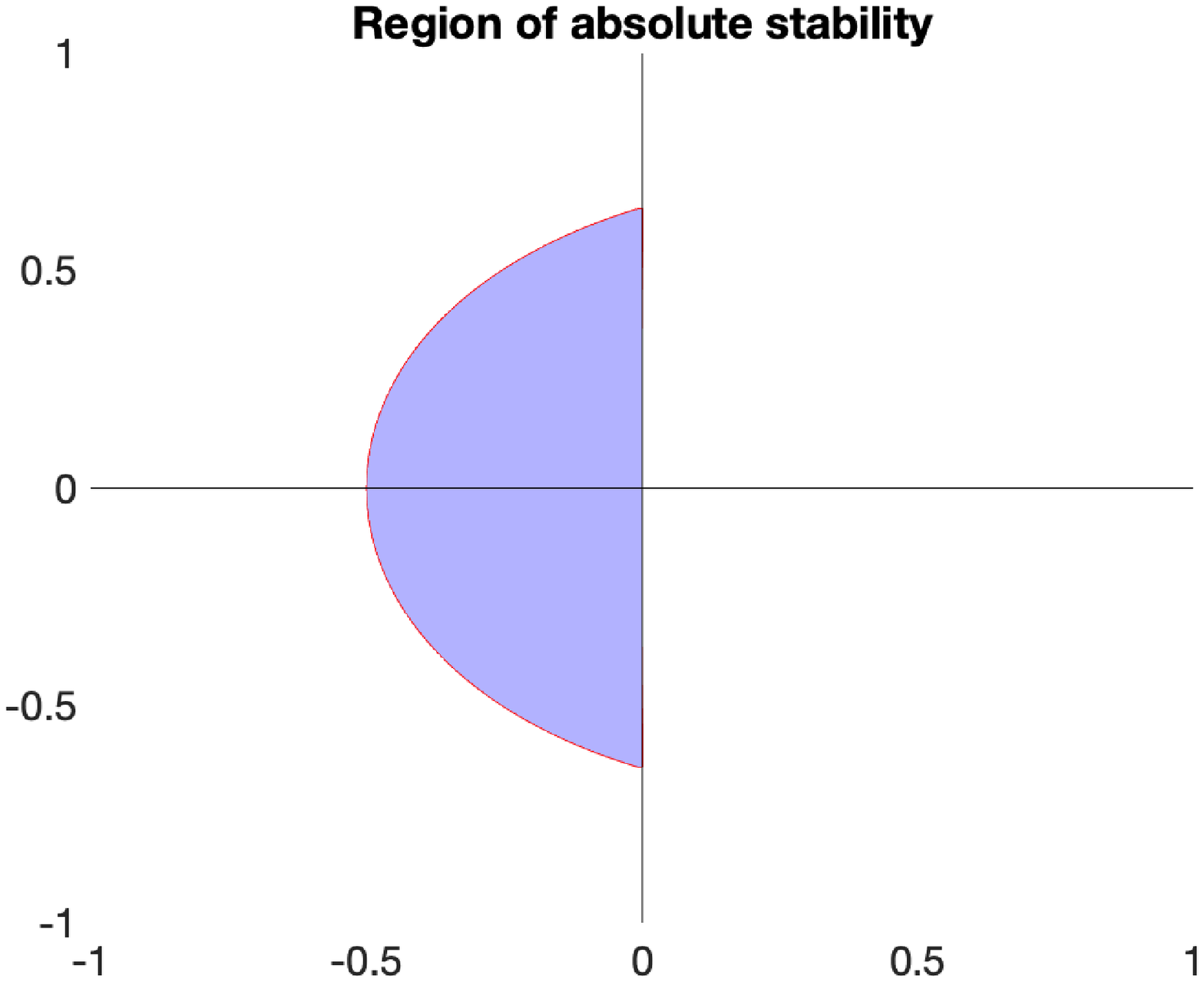} \hspace{-.2in}
  \includegraphics[width=.351\textwidth]{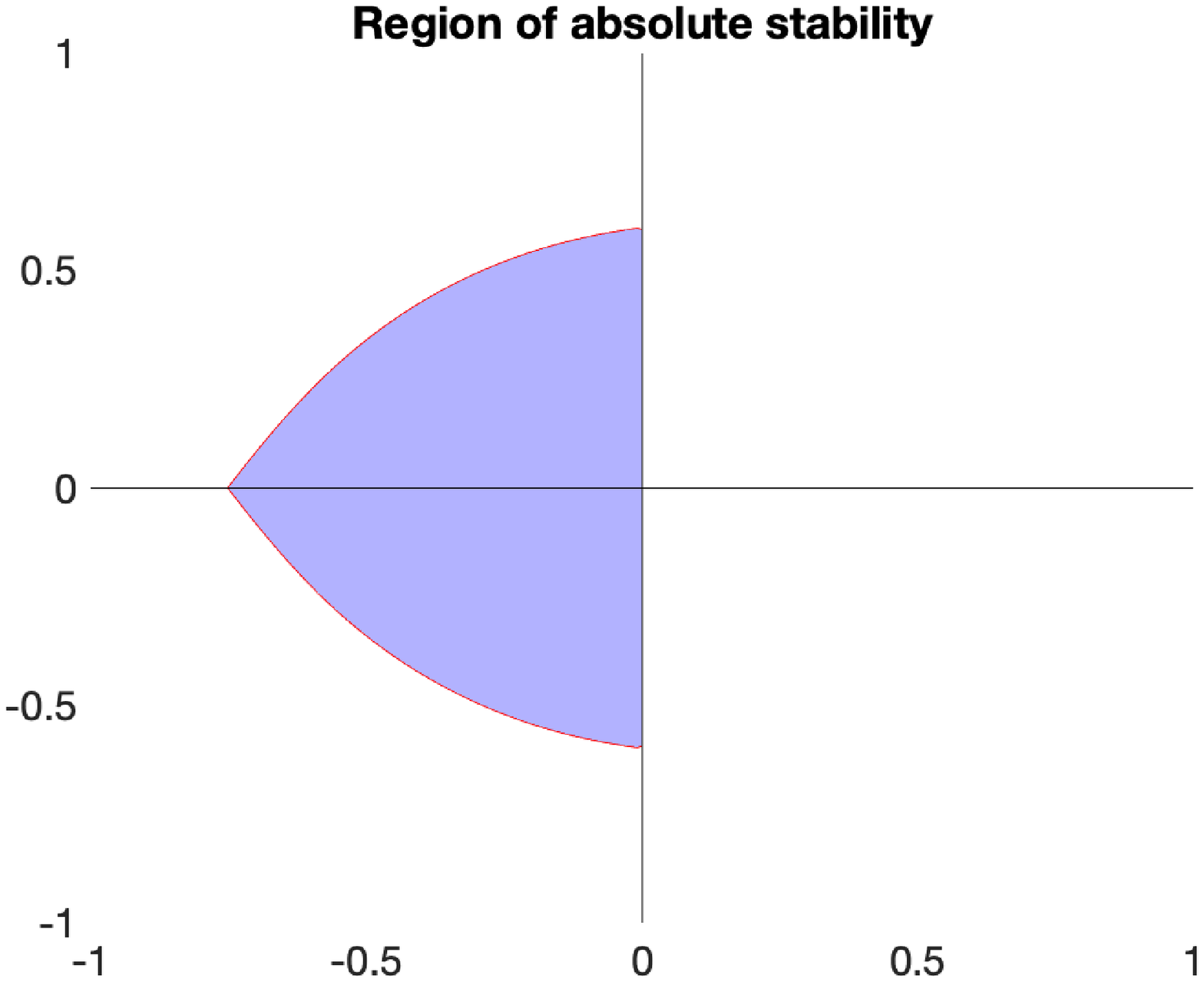} \hspace{-.2in}
   \includegraphics[width=.351\textwidth]{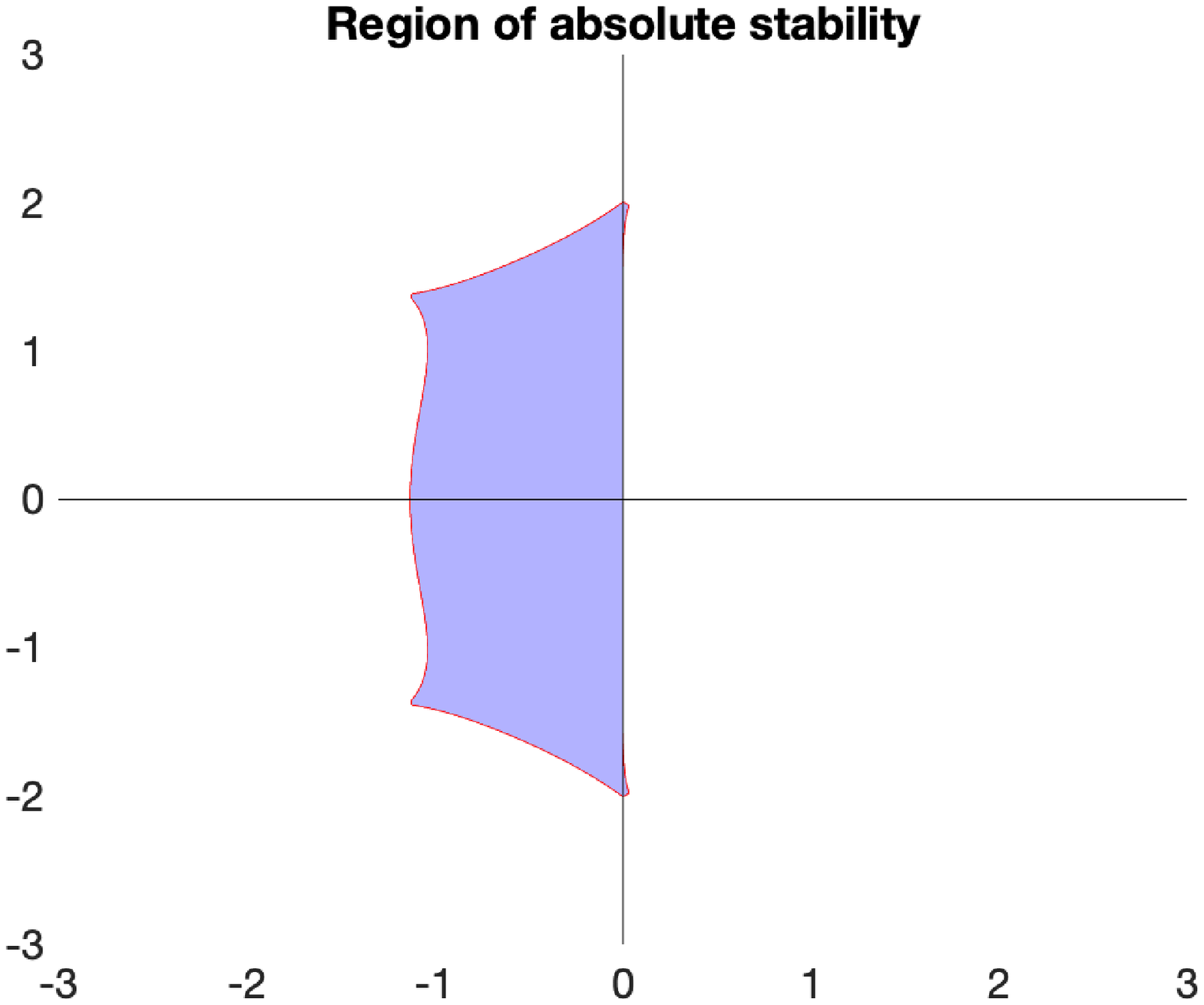} \\
    \includegraphics[width=.351\textwidth]{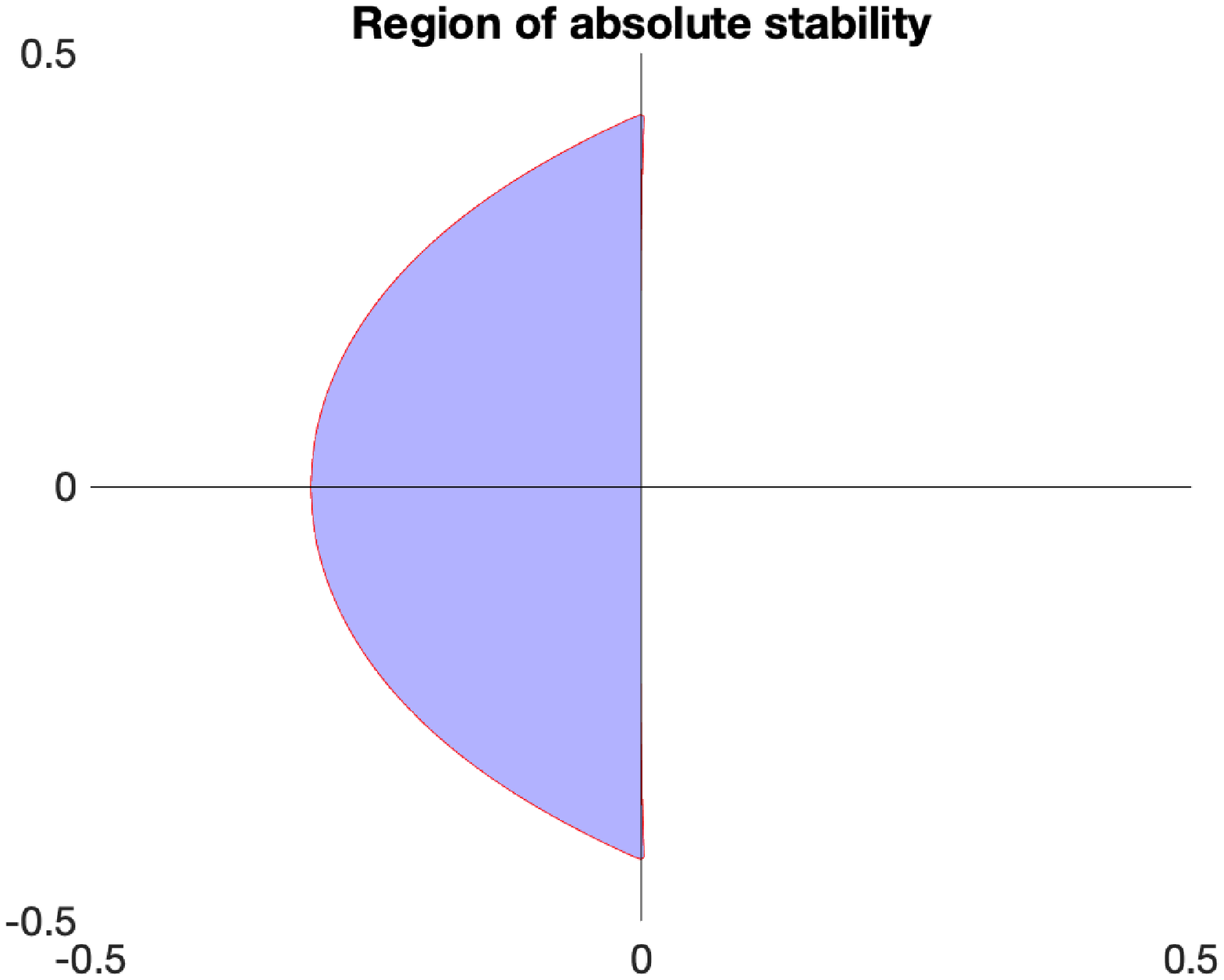} \hspace{-.2in}
  \includegraphics[width=.351\textwidth]{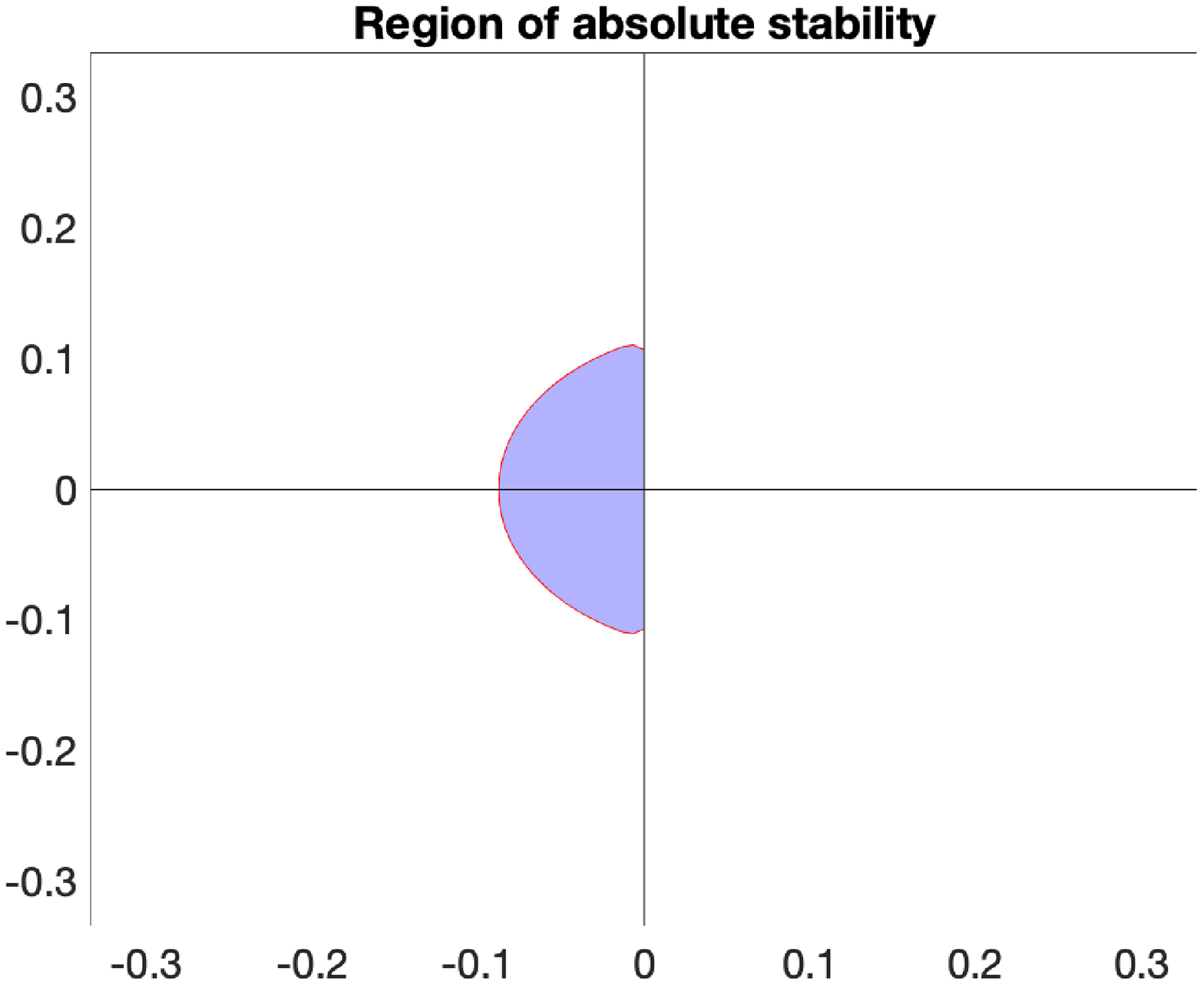}\hspace{-.2in}
   \includegraphics[width=.351\textwidth]{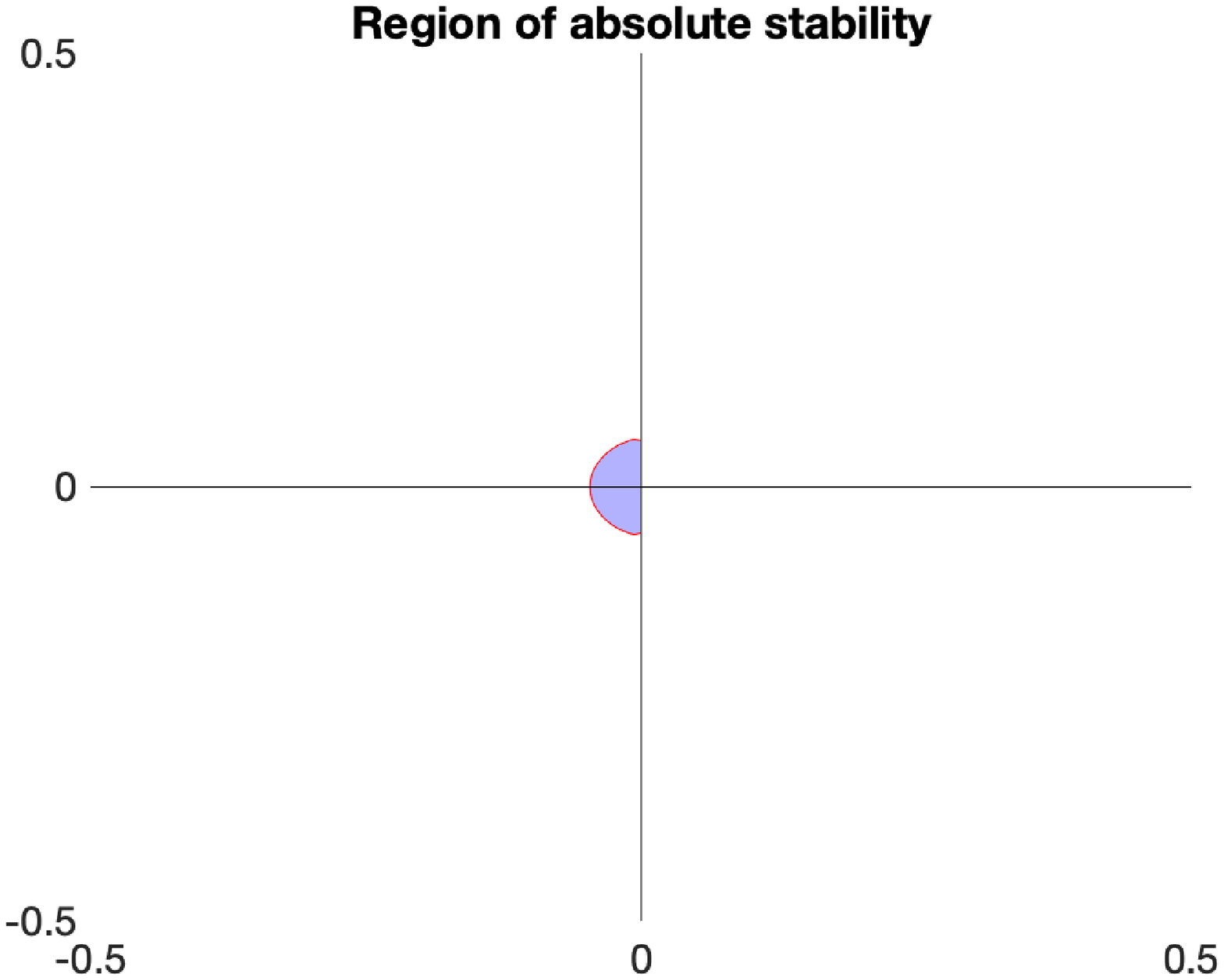} 
 \caption{\label{stability_regions} Stability regions os the eEIS+(2,4) (left),
 eEIS+(3,6) (middle), and eEIS+(5,7) (left). Bottom: stability regions of Adams Bashforth methods of the corresponding orders.}
\end{figure}

\noindent{\bf Explicit error inhibiting method eEIS+(3,6):}
This explicit three stage error inhibiting method is sixth
order if it is post processed, otherwise it gives fifth order accurate solutions.
The coefficients are:
\begin{eqnarray*}
 \mD &=& \left(\begin{array}{lll}
 d_1 & d_2 & d_3  \\
 d_1 & d_2 & d_3  \\
  d_1 & d_2 & d_3  \\
   \end{array}\right) 
   \; \; \; 
  \mbox{where} \; \; \; 
  \left(\begin{array}{l}
  d_1 \\ d_2 \\ d_3 \\ 
  \end{array} \right)=
  \left(\begin{array}{r}
0.844429704970785  \\  0.183161240819666   \\   -0.027590945790451 \\
  \end{array} \right)
\end{eqnarray*}
\vspace{-8pt}
\begin{eqnarray*}
\mA &=& \left(\begin{array}{ccc}
0.119782131013886   & 0.530075444729337 & 0.295068834365335  \\
0.034108245281186   &  0.972302193339061 & -2.090901330553469  \\
 -0.067206259640574 & 1.216836100819247 &  -0.661223528969050 \\
\end{array}\right) ,  
\end{eqnarray*}
and 
\vspace{-8pt}
\begin{eqnarray*}
 \mR &=& \left(\begin{array}{ccc}
 0 & 0 & 0 \\
2.464399360954857  & 0 & 0 \\
0.210685805002394  & 1.137368201889378  & 0 \\
\end{array}\right) . 
\end{eqnarray*}
The abscissas are $c_1= -0.891535334604278, \; \; 
c_2= -0.456552374616537, \; \; 
c_3=0.$ 
The truncation error vector is: \\
 $\tau_5= \left(     0.002851625181111, 
  -0.041196333074551,
  -0.186205087415322  \right)^T. $
The imaginary axis stability for this method is $(-0.5985,0.5985)$
To post-process this method we  take a linear combination of several numerical solutions:\\
$ \hat{v}^n = w_1 v^{n-1+c_1 } + w_2 v^{n-1+c_2} + w_3 v^{n-1} + w_4 v^{n+c_1} + w_5 v^{n+c_2} + w_6 v^{n}$\\
with weights:\\
$w_1=  -0.022895756757277 \; \;
w_2=   0.147460773700033 \; \;
  w_3=  -1.004504454589247$\\
  $w_4=   1.014066366026382 \; \;
    w_5=  -0.155617960794494 \; \;
    w_6=    1.021491032414602  .$

\smallskip

\noindent{\bf Explicit error inhibiting method eEIS(5,7):} 
This explicit three stage error inhibiting method is seventh
order if it is post processed, otherwise it gives sixth order accurate solutions.
The coefficients are given in the full $5 \times 5$  matrices $\mD, \mA$, and the strictly lower triangular 
matrix $\mR$. The coefficients in these matrices are:
\begin{eqnarray*}
 \mD &=& \left(\begin{array}{lllll}
 d_1 & d_2 & d_3 & d_4 & d_5 \\
 d_1 & d_2 & d_3 & d_4 & d_5 \\
 d_1 & d_2 & d_3 & d_4 & d_5 \\
 d_1 & d_2 & d_3 & d_4 & d_5 \\
  d_1 & d_2 & d_3 & d_4 & d_5 \\
   \end{array}\right) 
   \; \; \; 
  \mbox{where} \; \; \; 
  \left(\begin{array}{l}
  d_1 \\ d_2 \\ d_3 \\ d_4 \\ d_5 \\
  \end{array} \right)=
  \left(\begin{array}{r}
 -1.011623735666550\\ 
1.095449867712963 \\
1.789431260361622  \\
-0.872726291980225 \\  
-0.000531100427809 \\
  \end{array} \right)
\end{eqnarray*}   
\begin{eqnarray*}
& a_{11} =  0.542403428557849,  \; \; \;   & a_{12} = -0.760948514260222,  \; \; \;  a_{13} =  0.540150963081669,   \\
& a_{14} =  0.159072579950024,  \; \; \;   & a_{15} =  0.391433932478452, \\
& a_{21} =    0.156488609423175,  \; \; \;   & a_{22} = -0.242186890762633,  \; \; \;    a_{23} =    0.247855775765120,   \\
& a_{24} =       0.363064760009647,  \; \; \;   & a_{25} =   0.314695085548473,  
\end{eqnarray*}  
\begin{eqnarray*}
& a_{31} =    -0.052321607410313,  \; \; \;    & a_{32} = 0.097345632885763,  \; \; \;     a_{33} =    -0.221816006761698,  \\
& a_{34} =      0.900744500805372,  \; \; \;  &    a_{35} =   -0.013037891925596, \\
& a_{41} =    0.396379418407651,  \; \; \;  &  a_{42} = -0.498665400266501,  \; \; \;  a_{43} =     0.102234339427055,    \\
& a_{44} =    0.658422701253808,  \; \; \;  &  a_{45} =  -0.027557926231150, \\
& a_{51} =       1.449809317440111,  \; \; \;  &     a_{52} =-1.855043289819523,  \; \; \;    a_{53} =     0.795025316417296, \\ 
& a_{54} =      0.015237452869142,  \; \; \;   & a_{55} =   0.383077291565467. \\
& r_{21} =  .067750736449434,  \; \; \;  & r_{31} =-0.970866150021656,  \; \; \;   r_{32} = 1.411026181526863, \\ 
& r_{41} = 1.110541182884615,  \; \; \; & r_{42} =-0.861259710862469,  \; \; \;  r_{43} =  0.461581912124537, \\
& r_{51} =  0.142695702867824,  \; \; \; &  r_{52} =  0.803890471392162,  \; \; \;     r_{53} =-1.532866050532452, \\ 
& r_{54} =   1.507618973979455,  \; \; \; & \mbox{and $r_{ij} =0 $ whenever $j \geq i$}.
\end{eqnarray*}
    
The abscissas are 
$ c_1 = -0.837332796371710$, 
$ c_2 = -0.801777109746265$, \\
$ c_3 = -0.558370527080746$, 
$ c_4 = -0.367768669441936$,   
with $c_5 = 0.$ The truncation error vector is:
\[ \tau_{p+1}=\begin{bmatrix} -2.452136279362326 \times 10^{-3} \\
    -9.952624484663908 \times 10^{-4}  \\
    -6.583335089187866 \times 10^{-3} \\
    -1.186500759891287 \times 10^{-2} \\
    -6.616898102859160 \times 10^{-2} \\
    \end{bmatrix}
\] 
  The imaginary axis stability for this method is $(-2.0047, 2.0047)$.
    To post-process this method we  take a linear combination of several numerical solutions:\\
$ \hat{v}^n = w_1 v^{n-1+c_1 } + w_2 v^{n-1+c_2} + w_3 v^{n-1+c_3 } + w_4 v^{n-1+c_4} + w_5 v^{n} 
+ w_6 v^{n+c_1}+   w_7 v^{n+c_2}+ w_8 v^{n+c_3}+ w_9 v^{n+c_4}+ w_{10} v^{n} $\\
with weights:\\
$w_1=-0.108041130714896, \; \; 
   w_2=0.161475977012818, \; \;
 w_3= -0.205996099378955,$ \\
 $  w_4=0.317344948221968, \; \;
 w_5= -1.213968428247239, \; \;
 w_6=  6.439151511599838, $\\
$  w_7= -5.691821046332016, \; \;
   w_8=0.366796920786556, \; \; 
w_9=  -0.066491551558718, $
 $w_{10} =  1.001548898610644.$

\bigskip
Strong stability preserving methods \cite{SSPbook2011} are of interest for the time evolution of hyperbolic problems with shocks
or sharp gradients. These high order time-stepping methods preserve the nonlinear, non-inner-product
strong stability properties of the spatial discretization coupled with forward Euler time-stepping. The following two methods
show that it is possible to combine the EIS+ and SSP properties in a given method.

\noindent{\bf Explicit SSP error inhibiting method eSSP-EIS(3,4) }
This explicit three step error inhibiting method is fourth
order if it is post processed, otherwise it gives third order accurate solutions.
This method is strong stability preserving (SSP) with SSP coefficient $C= 0.7478$.
The coefficients of this method are: 

\begin{eqnarray*}
 \mD &=& \left(\begin{array}{lll}
 d_1 & d_2 & d_3  \\
 d_1 & d_2 & d_3  \\
  d_1 & d_2 & d_3  \\
   \end{array}\right) 
   \; \; \; 
  \mbox{where} \; \; \; 
  \left(\begin{array}{r}
  0.481236169483274 \\ 0 \\ 0.518763830516726\\
     \end{array}\right)  
     \end{eqnarray*}
\begin{eqnarray*}
    \mA &=& \left(\begin{array}{ccc}
    0 & 0 & 0.693711877859443\\
    0.081596114968722 & 0 & 0.333227135691426 \\
0.167078858485521 & 0 & 0.331269986340461 \\
       \end{array}\right)  \\
        \mR &=& \left(\begin{array}{ccc}
        0 & 0 & 0 \\
        0.642348436974698 & 0 & 0 \\
        0.254975180593489 & 0.530807045380761  & 0 \\
         \end{array}\right)  \\
\end{eqnarray*}
The abscissas are $ c_1 =   -0.590419192940789, \; \; 
c_2 =   -0.226959383165386, \; \; 
c_3 =  0.$ The truncation error vector is \\
$\ste_{p+1}= 10^{-2}  \times \left( -5.591881250375826,
    -5.080104811229902,
     5.187361482884723 \right).$
 To post-process this method we  take a linear combination of several numerical solutions:\\
$ \hat{v}^n = w_1 v^{n-1+c_1 } + w_2 v^{n-1+c_2} + w_3 v^{n-1 } +
 w_4 v^{n+c_1} + w_5 v^{n+c_2} + w_6 v^{n}$\\
with weights:\\
$w_1= -0.052886551536914, \; \; \;
w_2=    0.381993090397787, \; \; \;
w_3=    -0.580050146506483$ \\
$w_4=   0.439879549713232, \; \; \;
w_5=   -0.283052417950462, \; \; \;
w_6=    1.094116475882841.$

The SSP coefficient of this method compares favorably to the SSP coefficients of linear multistep methods:
to obtain a fourth order linear multistep method we need five steps and the SSP coefficient is small: $C=0.021$.
Even a linear multistep method with fifty steps has a smaller SSP coefficient $C=0.561$. 
However, a  comparison with Runge--Kutta methods is less favorable:
the low storage three-stage third order Shu-Osher SSP Runge--Kutta 
method has SSP coefficient $C= 1$
with the same number of function evaluations and lower storage. Scaled by the number of function evaluations we obtain an effective SSP
coefficient of $C/3 \approx 0.33$, whereas our current method has an effective SSP coefficient $C/3 \approx 0.25$. Our method is 
still competitive here because it is fourth order rather than third. However, if we compare to
 the five stage fourth order SSP Runge--Kutta  method which has SSP coefficient $C= 1.5$, ($C/5= 0.3$),
or to the low storage ten stage fourth order SSP Runge--Kutta  method has SSP coefficient $C= 6$ ($C/10= 0.6$),
our method is not as  efficient. 
The more correct comparison is to multi-step Runge--Kutta methods in \cite{msrk}: 
the three-stage, three-step fourth order method here has an effective SSP coefficient  $C/3 \approx 0.39$, which is higher than the 
eSSP-EIS(3,4).

\noindent{\bf Explicit SSP error inhibiting method eSSP-EIS(4,5) }
This four stage error inhibiting methods is fifth
order if it is post processed, otherwise it gives fourth order accurate solutions.
This method is strong stability preserving (SSP) with SSP coefficient $C=  0.643897$ (or an effective SSP 
coefficient $C/4\approx 0.16$).
This method is interesting because all SSP multistep methods require at least seven steps for fifth order and 
have very small SSP coefficients which make them inefficient.
There are no fifth order SSP Runge--Kutta methods \cite{SSPbook2011},
however we can  compare this method with the SSP multistep multistage methods in \cite{msrk}:
where the corresponding four step four stage method has effective SSP coefficient $C/4=0.38436$, which is more efficient.

The coefficients of the eSSP-EIS(4,5) are:
{\small
\begin{eqnarray*}
 \mD &=& \left(\begin{array}{llll}
 d_1 & d_2 & d_3 & d_4 \\
 d_1 & d_2 & d_3 & d_4 \\
  d_1 & d_2 & d_3 & d_4 \\
   d_1 & d_2 & d_3 & d_4 \\
   \end{array}\right) 
   \; \; \; 
  \mbox{where} \; \; \; 
  \left(\begin{array}{l}
  d_1 \\ d_2 \\ d_3 \\ d_4 \\
  \end{array} \right)
=
  \left(\begin{array}{r}
 0.391361993111787  \\
0.065690723540339    \\
0.209839489692975 \\
 0.333107793654898 \\
  \end{array} \right) 
  \end{eqnarray*}
  \begin{eqnarray*}
 \mA & = &   \left(\begin{array}{llll}
 0.111982379086567     & 0 & 0 &   0.517330861095791 \\
0.144956804626331     & 0 & 0 &  0.200688177229557 \\
 0.039506390225419    & 0.074215962133829 &  0.237072128025406 & 0.190419328868168 \\
 0.013111528886920     & 0.067038414113032  &  0.296412681422031 & 0.277723998040954 \\
   \end{array}\right) 
   \end{eqnarray*}
     \begin{eqnarray*}
   \mR & = &   \left(\begin{array}{llll}
   0 & 0 & 0 & 0 \\
0.602472175831079 & 0 & 0 & 0 \\
0.164197196121254  & 0.423264977696018 & 0 & 0 \\
0.054494380980164  &  0.140474767505132 & 0.515429866206022 & 0 \\
      \end{array}\right) \\
\end{eqnarray*}
}
The abscissas are $ c_1 =  -0.735372396971898 $,
$c_2 =-0.416568479467288$, \\
$c_3 =-0.236009654084161$, 
and $c_4 =0. $
The truncation error vector is 
\[
\tau_{p+1}=\begin{bmatrix} 
    -1.648864820077294 \\
    -4.617774532209270 \\
     0.7007842214544382  \\
     2.406415533885425 \\
    \end{bmatrix}  \times 10^{-2} \]
To post-process this method we  take a linear combination of several numerical solutions:\\
\[  \hat{v}^n = w_1 v^{n-1+c_1 } + w_2 v^{n-1+c_2} + w_3 v^{n-1+c_3} + w_4 v^{n-1} 
+ w_5 v^{n+c_1 } + w_6 v^{n+c_2} + w_7 v^{n+c_3} + w_8 v^{n}  \]
with weights:\\
$w_1= -0.052886551536914, \; \; \;
w_2=    0.381993090397787, \; \; \;
w_3=    -0.580050146506483$ \\
$w_4=   0.439879549713232, \; \; \;
w_5=   -0.283052417950462, \; \; \;
w_6=    1.094116475882841.$

\begin{rmk}
It is important to note that although these methods are SSP,  the post-processor is not guaranteed to preserve the strong stability 
properties of the solution. It is entirely possible that the $O(\dt^{p+1})$ accurate solution at the final time will satisfy the strong
stability property of interest and the post-processed solution will not.  In practice, this is not a major concern for the following reasons:
\begin{enumerate}
\item The post-processor is applied only once at the end of the simulation, and only impacts the solution at the final time. 
Typically, the strong stability  properties  we require are only important for the stability of the time evolution, 
but not necessarily needed at the final time. In such cases, the nonlinear strong stability properties that we are concerned about 
(e.g. positivity or total-variation boundedness)
are critical to the time-evolution itself. If the time-stepping method is not SSP, the code may crash due to non-physical negative values
in some of the quantities (such as pressure or density), or the method may become unstable due to non-physical oscillations
which grow and destroy the solution. However, a post-processor that is applied only once at the end of the time-simulation will not impact
the nonlinear stability of the time-evolution and so is not of concern.
Furthermore, if the final post-processing step results in a solution that has undesired characteristics, this higher order solution can be ignored.
\item The post-processing step is a simple projection that, as we saw above, changes the solution only on the order of $\dt^{p+1}$.
Thus, any violation of the strong stability properties are only at the level of $O(\dt^{p+1})$, which is very small and thus will not have 
major impact on the strong stability properties of the solution.
\end{enumerate}
These arguments are validated in our numerical tests, where we see that the post-processing does not have an adverse impact on the simulations
and leads to very small differences in the strong stability properties of interest between the final time solution and the post-processed solution.
 \end{rmk}

\subsection{Implicit Methods}
In this section we present several implicit error inhibiting methods that can be post-processed. 
For each method we present the coefficients $ \mD, \mA, \mR$, as well as values of the 
abscissas and the truncation error vector that must be used to construct the 
postprocessor. We denote an implicit $s$-step method that satisfies the conditions \eqref{EIS2conditions} 
and can be postprocessed to order $p$ as iEIS+($s$,$p$).  All the methods we present are 
A-stable, so we do not show their stability regions here.

\noindent{\bf A-stable implicit method  iEIS+(2,3).}
This A-stable implicit two stage error inhibiting method is third order if it is post processed, 
otherwise it gives second order accurate solutions.  The coefficients are given in:
  \vspace{-4pt}
\begin{eqnarray*}
& \mD = \left(\begin{array}{ll}
      2      &       -1      \\
       2     &        -1    \\
   \end{array}\right) ,     \; \;   
  & \mA = \frac{1}{12} \left(\begin{array}{ll}
     13 &          -14   \\
     16       &   -24   \\  
   \end{array}\right) ,    \; \; 
  \mR =   \frac{1}{12}  \left(\begin{array}{ll}
     19  &           0   \\    
      24 &      8 \\     
          \end{array}\right) .
  \end{eqnarray*}         
Here the abscissas are  $ c_1=-\frac{1}{2}, c_2=0, $    and  the truncation error vector is
$\tau = \left( \frac{3}{8}   , \frac{3}{4}        \right)^T.$
To post-process this method we  take a linear combination of several numerical solutions:\\
\[  \hat{v}^n = \frac{1}{2} v^{n-\frac{3}{2}  } - \frac{3}{2}  v^{n-1} + \frac{3}{2} v^{n-\frac{1}{2}} + \frac{1}{2} v^{n} . \]
       
       The cost of the implicit solve is often non-trivial, and the computation can be speeded up if 
       the method admits an efficient parallel implementation. For this reason, we added the requirement 
       in our optimization code that  $\mR$ is a diagonal matrix. All the following methods are efficient when 
       implemented in parallel.
       
\smallskip 

\noindent{\bf Parallel-efficient A-stable implicit method  iEIS+(2,3).}
This two stage error inhibiting method is third
order if it is post processed, otherwise it gives second order accurate solutions.
This method is A-stable implicit with diagonal $\mR$ allowing the implicit solves to be performed concurrently.  
The coefficients are given in:

\begin{eqnarray*}
& \mD = \frac{1}{15} \left(\begin{array}{ll}
16    &      -15    \\
16  &      -15    \\
 \end{array}\right) ,  \; \; 
&  \mA = \frac{1}{480}
\left(\begin{array}{ll}
 75     &    106   \\
-1440         &   736   \\
   \end{array}\right) ,      \; \; 
 \mR =\frac{1}{32}  \left(\begin{array}{ll}
21    &      0        \\
       0          &    96        \\
          \end{array}\right) .
 \end{eqnarray*}
Here the abscissas are
 $ c_1=- \frac{1}{2}, c_2=0, $
   and  the truncation error vector is
$\tau = \frac{1}{120}, \left( 31 , 496 \right)^T.$ 
To post-process this method we  take a linear combination of several numerical solutions:\\
\[  \hat{v}^n = \frac{4}{15} v^{n-\frac{3}{2}   } - \frac{4}{5}  v^{n-1} + \frac{4}{5} v^{n-\frac{1}{2}  } + \frac{11}{15} v^{n} .\]

\smallskip

\noindent{\bf Parallel-efficient A-stable  implicit method iEIS+(3,4)} 
This three stage error inhibiting method is fourth
order if it is post processed, otherwise it gives third order accurate solutions.
This method is A-stable implicit with diagonal R allowing the implicit solves to be performed concurrently.  
The coefficients are given in:
  \vspace{-8pt}
 {\small
 \begin{eqnarray*}
 \mD &=& \left(\begin{array}{llll}
 d_1 & d_2 & d_3  \\
 d_1 & d_2 & d_3  \\
  d_1 & d_2 & d_3  \\
   \end{array}\right) 
   \; \; \; 
  \mbox{where} \; \; \; 
  \left(\begin{array}{l}
1.100594730800523  \\ -0.335370831614021   \\ 0.234776100813498 \\
  \end{array} \right)
  \end{eqnarray*} 
  \vspace{-14pt}
\begin{eqnarray*}
\mA &=& \left(\begin{array}{lll}
   0.806950212712456 &  -0.386181733528596  & -0.182046279153154 \\
   2.687898652721551  & -1.944296251569286  & -1.165162710461159 \\ 
   1.052813949541399  & -0.265689012035030  & -0.052553462549502 \\
 \end{array}\right) 
   \end{eqnarray*} 
\begin{eqnarray*}
\mR &=& \left(\begin{array}{lll}
   0.716550676631637          &         0        &           0 \\
                   0   & 1.710166519304569        &           0 \\
                   0     &     0   & 0.887368068372141 \\
 \end{array} \right) 
 \end{eqnarray*}
 }
 Here the abscissas are
 $ c_1=-\frac{2}{3},  c_2=-\frac{1}{3}, c_3=0, $
   and  the truncation error vector is
\[\tau = \left( 0.278446186799822, 1.535336949555884,  0.887870711092943 \right)^T.\]   
    To post-process this method we  take a linear combination of several numerical solutions:\\
$ \hat{v}^n = w_1 v^{n-1+c_1 } + w_2 v^{n-1+c_2} + w_3 v^{n-1 } +
 w_4 v^{n+c_1} + w_5 v^{n+c_2} + w_6 v^{n}$\\
with weights:\\
$w_1 =  -0.133267324186142, \; \; \;
w_2 =     0.666336620930711, \; \; \;
w_3 =    -1.332673241861424,$
$w_4 =     1.332673241861424, \; \; \;
w_5 =    -0.666336620930712, \; \; \;
w_6 =     1.133267324186142.$

\smallskip

\noindent{\bf Parallel-efficient A-stable implicit method iEIS+(4,5). }
This four stage error inhibiting method is fifth
order if it is post processed, otherwise it gives fourth order accurate solutions.
This method is A-stable implicit with diagonal R allowing the implicit solves to be performed concurrently.  
The coefficients are given in:
{\small
\begin{eqnarray*}
 \mD &=& \left(\begin{array}{llll}
 d_1 & d_2 & d_3 & d_4 \\
 d_1 & d_2 & d_3 & d_4 \\
  d_1 & d_2 & d_3 & d_4 \\
   d_1 & d_2 & d_3 & d_4 \\
   \end{array}\right) 
   \; \; \; 
  \mbox{where} \; \; \; 
  \left(\begin{array}{l}
  d_1 \\ d_2 \\ d_3 \\ d_4 \\
  \end{array} \right)
=
  \left(\begin{array}{r}
   -2.189053680903935 \\
    3.606949225806165 \\
 -0.710842571233197 \\ 
0.292947026330966 \\
  \end{array} \right) \\
  \mR &=&   \left( \begin{array}{cccc}
    r_1 & 0 & 0& 0 \\
    0 & r_2 & 0 & 0 \\
    0 & 0 & r_3 & 0 \\
    0 & 0 & 0 & r_4 \\
      \end{array} \right)  
        \; \; \;  \mbox{where} \; \; \; 
       \left(\begin{array}{l}
  r_1 \\ r_2 \\ r_3 \\ r_4 \\
  \end{array} \right)
=
  \left(\begin{array}{r}
0.243205109444297 \\
0.428641943283907 \\
1.223508778356526 \\
0.861606621761651 \\
    \end{array} \right)
   \\
        \mbox{and:} \\
\mA &=& \left( \begin{array}{rrrr}
 0.542633235622690  &  0.572906890966515 & -0.147775065138658  & 0.108270009767368 \\
-0.935354930827541  & 1.187517922840311 &  0.040246733851822  & -0.237077959731666 \\
-3.856502347754360  &  5.000000000000000  &  3.366967278814666  & -5.000000000000000 \\
-3.605680346039871 & 4.951687114045852  & 1.612027197556519 &  -2.835666877907317 \\
  \end{array} \right)   
      \end{eqnarray*} 
   }
   Here the abscissas are $ c_1=-\frac{3}{4},  c_2=-\frac{1}{2}, c_3=-\frac{1}{4}, c_4=0, $
  and  the truncation error vector is
\[ \tau_p=\begin{bmatrix}   0.044949370534240\\
   0.165996341680758\\
   1.268926100495425\\
   1.371111036428543 \end{bmatrix} .\]
To post-process this method we  take a linear combination of several numerical solutions:\\
\[  \hat{v}^n = w_1 v^{n-\frac{7}{4} } + w_2 v^{n-\frac{3}{2}} + w_3 v^{n-\frac{5}{4}} + w_4 v^{n-1} 
+ w_5 v^{n-\frac{3}{4}} + w_6 v^{n-\frac{1}{2}} + w_7 v^{n-\frac{1}{4}} + w_8 v^{n}  \]
with weights:\\
$w_1 =   0.081324340500950, \; \; \;
w_2 =    -0.569270383506653, \; \; \;
w_3 =     1.707811150519959, $\\
$w_4 =    -2.846351917533271, \; \; \;
w_5 =     2.846351917533285, \; \; \;
w_6 =    -1.707811150519988, $ \\
$w_7 =     0.569270383506672, \; \; \;
w_8 =     0.918675659499045.$

\bigskip

\section{Numerical Results\label{sec:test}}

In this section we test the methods presented in Section 4 on a selection of numerical test cases. Most of the
numerical studies are designed to show that the methods achieve the desired convergence rates
on nonlinear scalar and systems of ODEs, as well as systems of ODEs resulting from semi-discretizations of PDEs. 
We also explore the behavior of the SSP methods in terms of preserving the total variation diminishing properties of 
the spatial discretizations, and the issue of order reduction that occurs in implicit methods.

\subsection{Comparison of explicit schemes}

In Section 4.1 we presented several explicit EIS schemes that can be post-processed to attain higher order.
Here we demonstrate that these methods attain the design order of convergence on several standard test 
cases. From the point of view of practical implementation, we are interested in the computational cost needed
to attain a certain level of accuracy. To compute this, we look at the number of time-steps needed 
with and without post-processing to attain an error of a given size.
Since the cost of post-processing is a linear combination of some solutions, we consider this cost to be at most
that of one   function evaluation.
We comment on the the relative cost to achieve a certain accuracy, and show that using 
the  post-processor enables a far more efficient  computation.

\smallskip

\noindent{\bf Nonlinear scalar ODE:}
We compare the performance of several two-step schemes on the nonlinear ODE
\[ y'= - y^2\]
with initial condition $y(0)=2$. The methods we consider here are: 
\begin{itemize}
\item A non-EIS two step second order method presented by Butcher in \cite{{butcher1993a}}
{
\[ V^{n+1}  =   
\frac{1}{4} 
\left( \begin{array}{ccc}
-3& \;\; & 7 \\
-3 &  & 7
\end{array} \right ) V^n + 
\frac{\dt}{8}\left( \begin{array}{ccc}
-3  & \;&-3 \\
-7 & & 9
\end{array} \right )  F(V^n)
 \]}
abscissas are $c_1=1,c_2=2$. 
(Note that the abscissas are different in John Butcher's formulation).
\item An eEIS(2,3) method presented in \cite{EISpaper1}
{
\[ 
V^{n+1}  \,=  \, \frac{1}{6}\left( \begin{array}{ccc}
7& \;\; & -1 \\
7 &  & -1
\end{array} \right ) V^n + 
\frac{\dt}{24}\left( \begin{array}{ccc}
1  & & 125 \\
-17 & \;& 55 \\
\end{array} \right ) F(V^n)
 \]}
 abscissas are $c_1=-\frac{1}{2},c_2=0$.
\item The eEIS+(2,4) method presented in Section 4.1.
\end{itemize}

\begin{figure}[hbt]
    \centering
           \includegraphics[width=.85\textwidth]{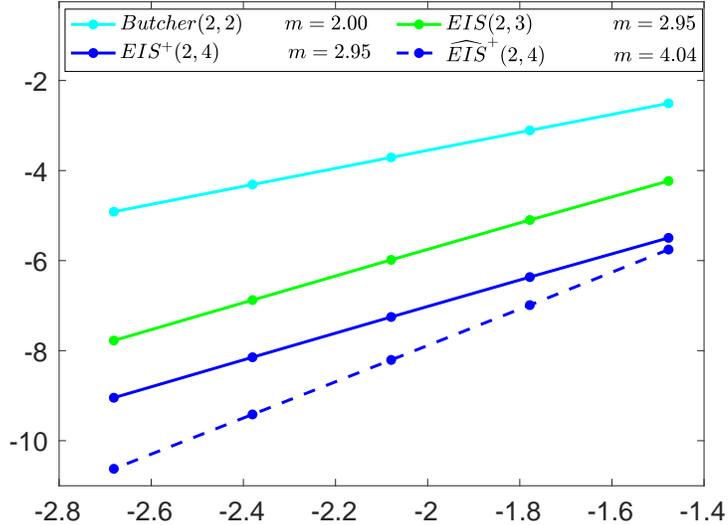}
           \caption{Convergence of several two-step schemes on a nonlinear scalar test problem.
           On the x-axis is  $log_{10}$ of time-step and on the y-axis is $log_{10}$ of the errors.
           The  non-EIS method by Butcher (in cyan solid) shows second order convergence , while the EIS method (in green solid) shows third order.
The eEIS+(2,4) method (blue solid) is third order as well, but with a smaller error constant. 
Finally, when the results of the eEIS+(2,4) method for the final time $T_f=1$ are post-processed (blue dashed),  fourth order convergence is obtained.
\label{comparison1} }
\end{figure}

Figure \ref{comparison1} shows the convergence history of each of these methods. 
The  non-EIS method by Butcher (in cyan) shows clear second order, 
while the EIS method (in green) shows third order.
The eEIS+(2,4) method (blue, denoted $EIS$+ in legend) is third order as well, 
but with a smaller error constant. 
Finally, when the results of the  eEIS+(2,4) method for the final time $T_f=1.0$ 
are postprocessed ( dashed blue line),  
fourth order convergence is obtained (denoted $\widehat{EIS}_1^+$ and $ \widehat{EIS}_2^+$  in legend, respectively).

\bigskip

\noindent{\bf Non-stiff Van der Pol oscillator:}
The nonlinear system of ODEs is given by
\[ \left( \begin{array}{l} y_1 \\ y_2 \end{array} \right)'
= \left( \begin{array}{c} y_2 \\ (1-y_1^2) y_2 - y_1 \end{array} \right)
\]
with initial condition $\vy(0) = (2,0)^T$. We use the explicit methods eEIS+(2,4),
eEIS+(3,6), eEIS+(5,7) to evolve this problem to the final time $T_f=2.0$ and postprocess the
solution at the final time as described in Section \ref{sec:postproc}. In Figure \ref{VDPexplicit} we show the errors
for different values of $\dt$ for $y_1$ on the left and $y_2$ on the right. 
The errors before post-processing are in solid lines, after post-processing  are dashed lines. 
The slopes of these lines
are computed using MATLAB's {\tt polyfit} function and are shown in the legend.
This example verifies that  numerical solutions from the eEIS+ methods 
attain the expected orders of convergence with and without post-processing.

\smallskip

We also see the impact of post-processing here: to attain an accuracy of $10^{-6}$ the eEIS+(2,4) 
method with no post-processing requires a stepsize $\dt \approx 0.0137$, or approximately 145 time-steps
which constitute 290 function evaluations.
In comparison,  the eEIS+(2,4)  method with post-processing will attain an accuracy of $10^{-6}$
with a stepsize $\dt \approx 0.0317$, or approximately 63 time-steps which means 126 function evaluations.
The additional cost of post-processing is less than one function evaluation, so we obtain a speedup of a 
factor of about 2.28. 
For the eEIS+(3,6) method to attain a level of accuracy of $10^{-9}$, we require 158 time-steps 
without post-processing and 91 time-steps with the post-processor.
If we count the cost of the post-processing as one  function evaluation,
using the  post-processor gives us a speedup of a factor of 1.72.
For the eEIS+(5,7) method to attain a level of accuracy of $10^{-11}$, we require 132 time-steps 
without post-processing and 75 time-steps with post-processor. 
Once again, if we count the cost of the post-processing as one  function evaluation,
using  post-processor gives us a speedup of a factor of 1.75.

\begin{figure}
    \centering
          \includegraphics[width=.52\textwidth]{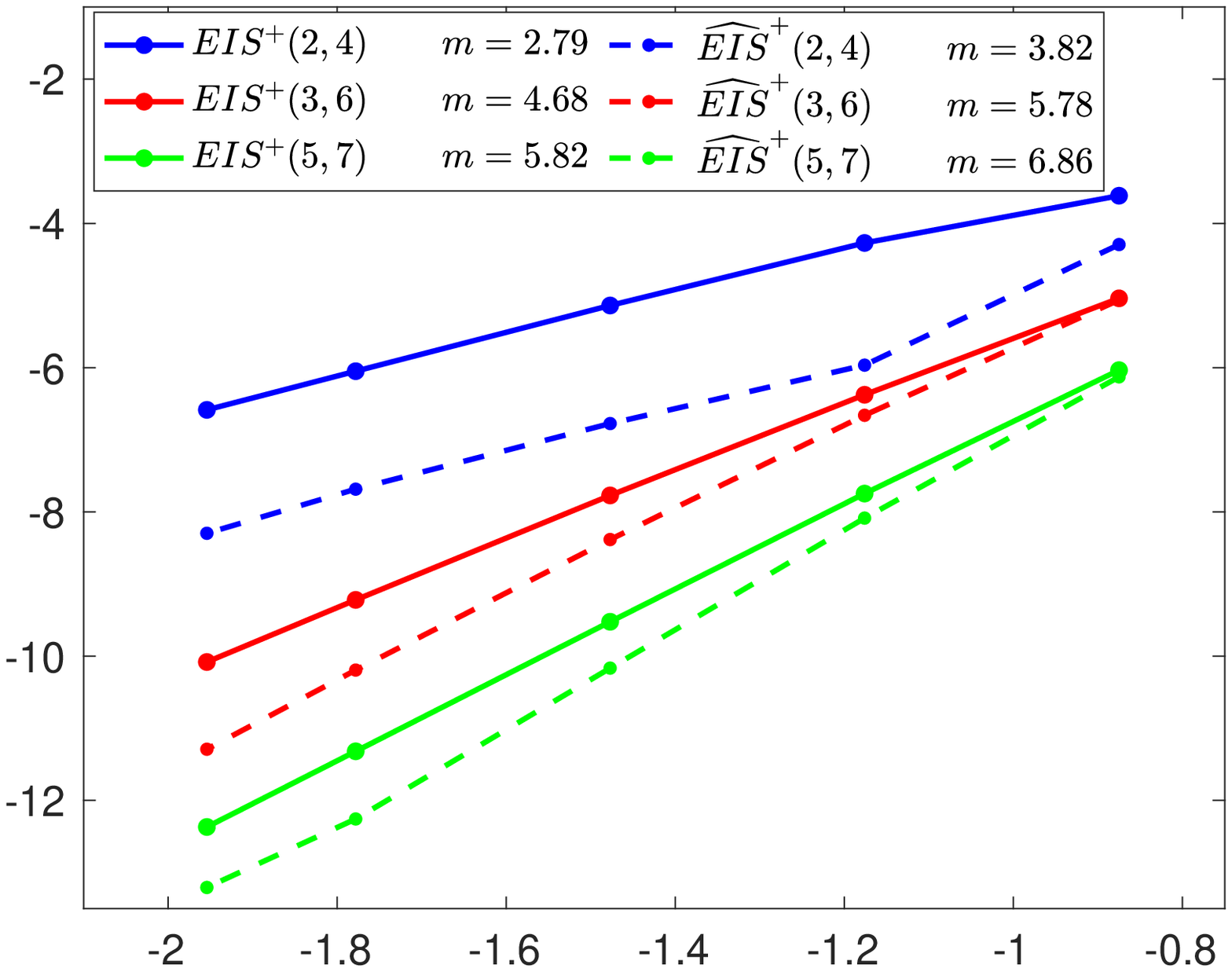} \hspace{-.325in}
            \includegraphics[width=.52\textwidth]{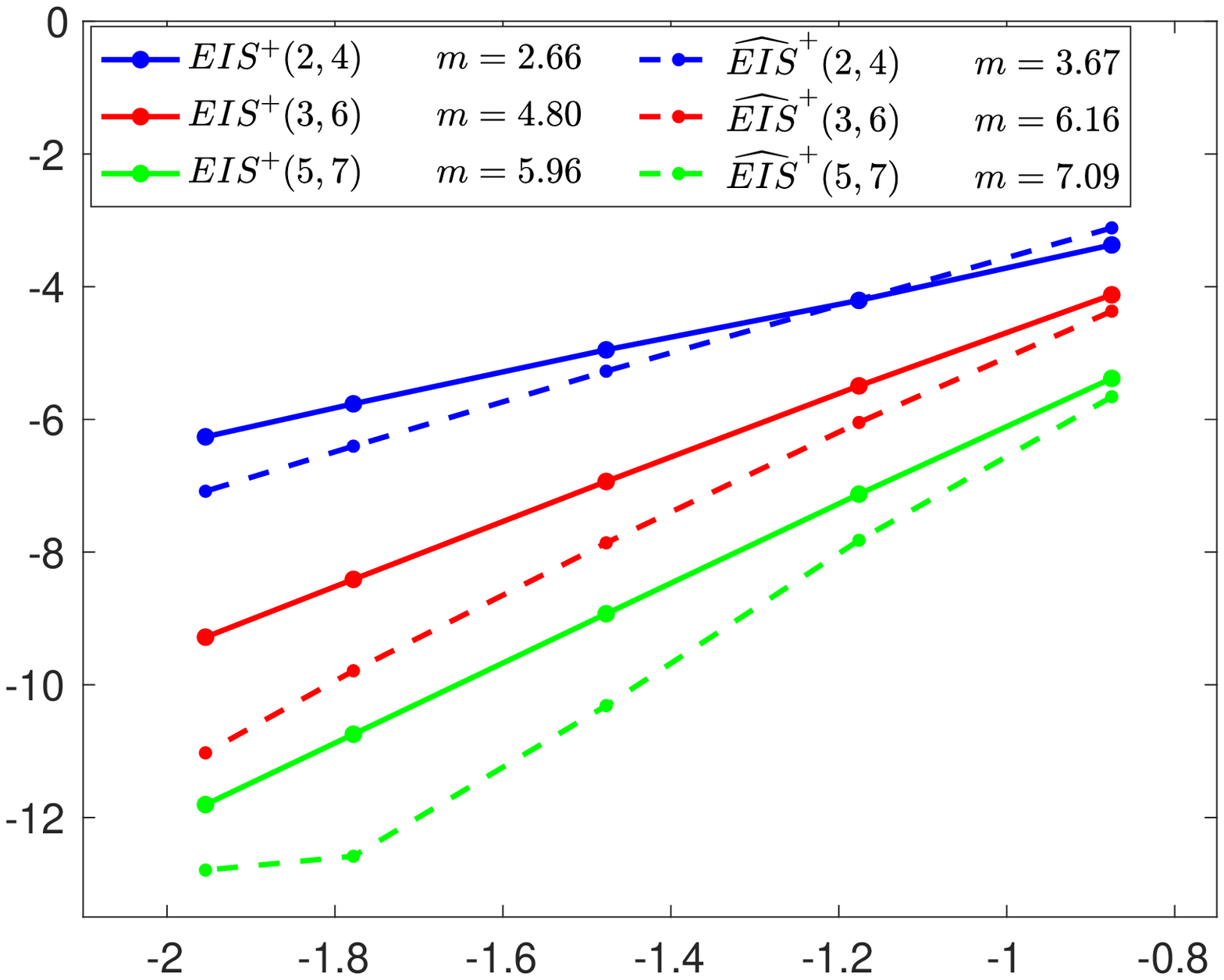}
           \caption{Convergence of the postprocessed solution from evolving the Van der Pol 
           system to time $T_f=2.0$ using eEIS+(2,4) (blue), eEIS+(3,6) (red), and eEIS+(5,7) (green)
            methods for the non-stiff Van der Pol system. 
The errors before post-processing are in solid lines (slopes given by $m$), 
after post-processing  are dashed lines. 
Left: the $log_{10}$ errors in the first element, $y_1$, for various time-steps.
Right: the same for the second element $y_2$.
\label{VDPexplicit} }
\end{figure}

\bigskip

\noindent{\bf Advection-diffusion problem:}
We solve the advection--diffusion problem
\[ u_t +a u_x = b u_{xx} \]
with periodic boundary conditions $u(0,t) = u(2 \pi, t)$ and initial condition
\[ u(x,0) = \sin(5 x).\] Here we use $a=1.0$ and $b=0.1$.
We discretize the problem in space with $N=41$ points
using a Fourier spectral method to obtain the ODE
\[ y' = \left( -D_x   + \frac{1}{10} D_x^2 \right) y,\]
where $D_x$ is  the first order Fourier differentiation matrix and $D_x^2$ is the second order Fourier differentiation matrix. Due to the periodicity of this problem, the spatial differentiation is exact and so the errors arise only from the time evolution of this ODE.

We evolve this problem forward to final time $T_f=1.0$ using the eEIS+(s,p) methods presented in Section 4.1
with $\dt = T_f / M$,
and postprocess the solution at the final time as described in Section \ref{sec:postproc}. 
In Table \ref{explicitAdvDiff} we show the errors for  different values of $M = 1/\dt$ before 
and after postprocessing. We observe that the errors are of the predicted EIS order
(which is one order higher than predicted by a truncation error analysis)
before post-processing and gain an order after postprocessing, as expected.

\bigskip

Notice that post-processing gives us much better accuracy for a very small price:
for example,  using the eEIS+(2,4) method  without post-processing with 300 time-steps  
($\dt \approx 0.0033$)  we obtain an  error of $2.16 \times 10^{-7} $.
 but if we use 
the eEIS+(2,4) method  with post-processoring, we need only 150 time-steps 
($\dt \approx 0.0066$) to obtain an even smaller error of $1.96\times 10^{-7}$. 
Clearly, using the post-processor speeds up our time to a solution of desired accuracy by 
slightly more than a factor of two.  
Similarly, using  the eEIS+(3,6) we can obtain a  final-time solution of accuracy $6.90\times 10^{-12}$  
without post-processing using 300 time-steps, while with post-processing 
we require only 200 time-steps to obtain a final-time solution of accuracy $ 7.34\times 10^{-12}$.
Finally, using  the eEIS+(5,7) we can obtain a 
final-time solution of accuracy $2.22\times 10^{-10}$  without post-processing using 55 time-steps, while with post-processing 
we require only 45 time-steps to obtain a better final-time solution of accuracy $ 1.43\times 10^{-10}$.
Using these methods with time-stepping allows for a more efficient production of an accurate solution.

\begin{table} \label{explicitAdvDiff}
\begin{center}
\begin{tabular}{|l|l|ll|ll|} \hline
&&  \multicolumn{2}{c|}{at final time}& \multicolumn{2}{c|}{post-processessor} \\ \hline
method & M & error &   order & error & order \\ \hline
eEIS+(2,4)
&100  &  6.52$\times 10^{-6} $   &    --   & 1.01$\times 10^{-6}$ &    --   \\
&150  &  1.83$\times 10^{-6} $   & 3.13    & 1.96$\times 10^{-7}$ &  4.04   \\
&200  &  7.52$\times 10^{-7} $   & 3.09    & 6.16$\times 10^{-8}$ & 4.03 \\
&250  &  3.78$\times 10^{-7} $   & 3.07    & 2.50$\times 10^{-8}$ & 4.02\\
&300  &  2.16$\times 10^{-7} $   & 3.06    & 1.20$\times 10^{-8}$ & 4.02 \\ \hline
eEIS+(3,6)
&100  &  1.94$\times 10^{-9} $ &    --  &  4.90$\times 10^{-10}$ &    --           \\
&150  &  2.37$\times 10^{-10}$ &  5.18  &  4.19$\times 10^{-11}$ &  6.06   \\
&200  &  5.44$\times 10^{-11}$ &  5.12  &  7.34$\times 10^{-12}$ &  6.05   \\
&250  &  1.74$\times 10^{-11}$ &  5.09  &  1.91$\times 10^{-12}$ &  6.02   \\
&300  &  6.90$\times 10^{-12}$ &  5.08  &  6.52$\times 10^{-13}$ &  5.90  \\ \hline
eEIS+(5,7)
&35  &  3.34$\times 10^{-9} $  &   --   &  8.27$\times 10^{-10}$   &  --        \\
&40  &  1.50$\times 10^{-9} $  &  6.00  &  3.25$\times 10^{-10}$ & 6.97    \\
&45  &  7.41$\times 10^{-10}$  &  5.99  &  1.43$\times 10^{-10}$ & 6.98  \\
&50  &  3.94$\times 10^{-10}$  &  5.99  &  6.86$\times 10^{-11}$ & 6.98  \\
&55  &  2.22$\times 10^{-10}$  &  5.99  &  3.52$\times 10^{-11}$ & 6.99    \\ \hline
    \end{tabular}
    \end{center}
   \caption{Convergence of the solution from evolving the advection-diffusion equation
problem to time $T_f=1$ using different  eEIS methods with and without post-processing.
   }
    \end{table}
    
 \bigskip
 
 Next, we study the SSP properties of the eSSP-EIS+ schemes  presented in Section 4.1.
 To do so, we look at a problem where the spatial discretization is total variation diminishing
 when coupled with forward Euler time-stepping, and examine the maximal rise in total variation
 when this problem is evolved forward with an eSSP-EIS+ scheme.

\noindent{\bf SSP study:}
As two of our explicit methods are strong stability preserving, we demonstrate their ability to preserve
the total variation diminishing property of a first-order upwind spatial difference applied to Burgers' equation
with step function initial conditions:
  \begin{align}
u_t + \left( \frac{1}{2} u^2 \right)_x & = 0 \hspace{.75in}
    u(0,x)  =
\begin{cases}
1, & \text{if } 0 \leq x \leq 1/2 \\
0, & \text{if } x>1/2 \nonumber
\end{cases}
\end{align}
on the domain $[0,1]$ with periodic boundary conditions.
We used a first-order upwind difference to semi-discretize, with $200$ spatial points, the 
nonlinear spatial  terms $N(u) \approx -  \left( \frac{1}{2} u^2 \right)_x$.

We evolve the solution 10 time-steps using the SSP methods eSSP-EIS(3,4) and eSSP-EIS(4,5) 
for different values of $\dt$. 
At each time-level $y^n$ we compute the total variation of the solution at time $u^n$,  
\[ \| u^n \|_{TV} = \sum_{j} \left| u^n_{j+1} - u^n_j \right| . \]
The maximal rise in total variation (solid line with  circle markers) 
is graphed against $\lambda = \frac{\dt}{\dx}$ in Figure \ref{SSPtest}.
We then post-process the solution at the final time for all values of $\dt$ before the total variation begins to rise,
and compute the difference between the total variation of the solution at the 
final time and the postprocessed solution
\[  \| u^n \|_{TV}  -  \| \widehat{u}^n \|_{TV}  \]

\begin{figure}[t]
    \centering
          \includegraphics[width=.51\textwidth]{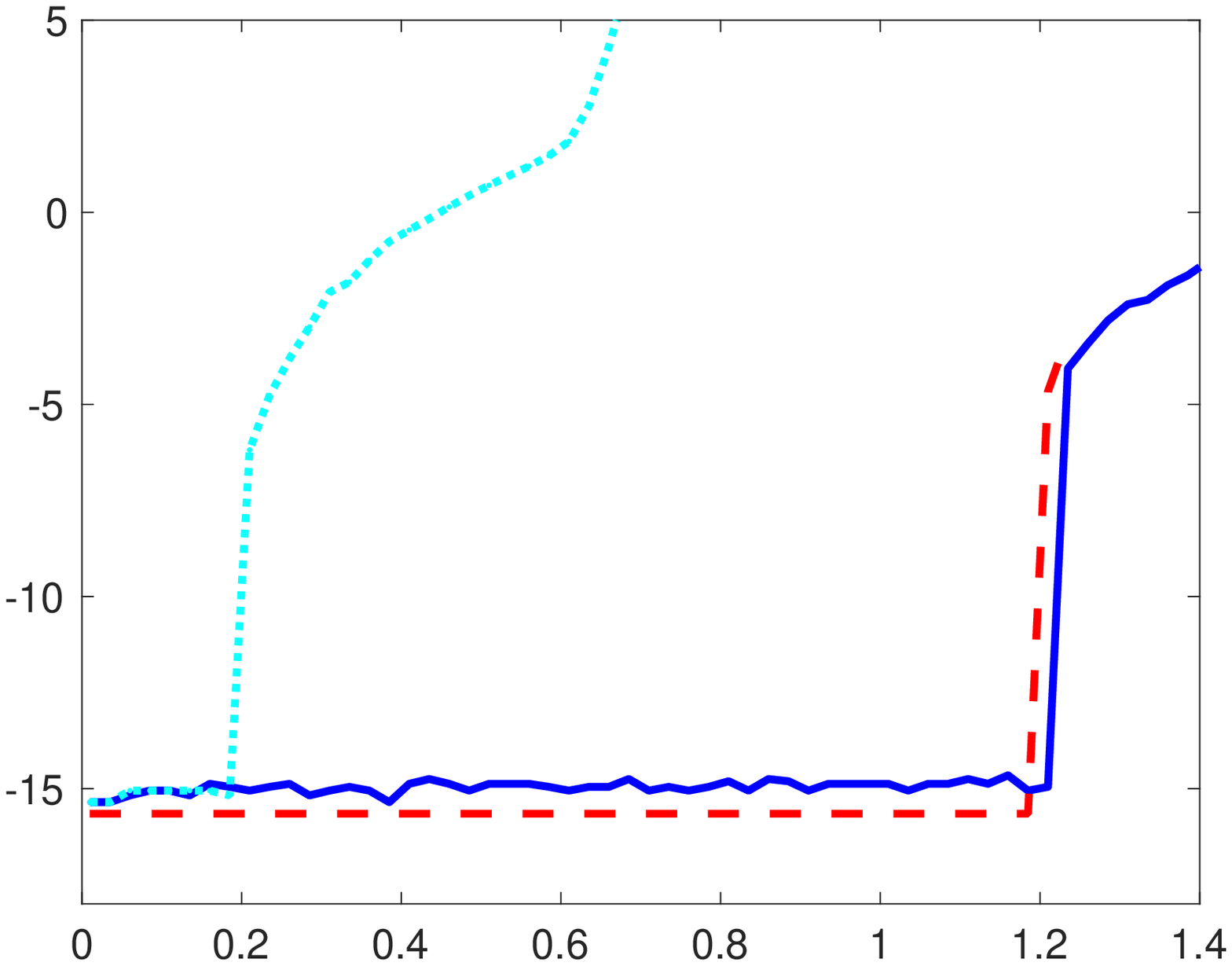} \hspace{-0.2in}
            \includegraphics[width=.51\textwidth]{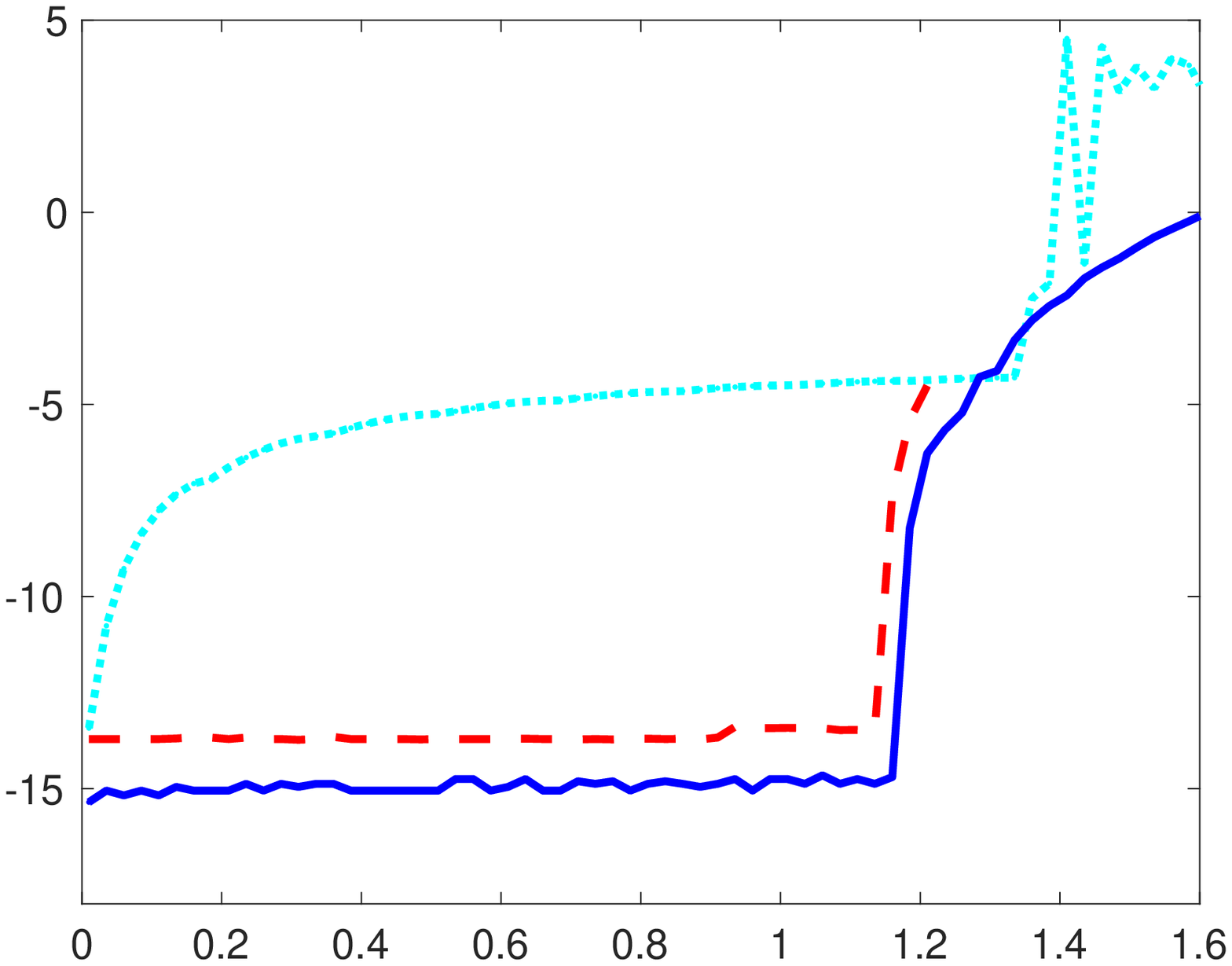}
           \caption{SSP study: the eSSP-EIS methods were used to evolve forward the solution in time
           of Burgers' equation with a step function initial conditions.
           On the y-axis is $log_{10}$ of the maximal rise in total variation, and 
           on the x-axis   the CFL number $\lambda$. The blue  solid line is the maximal rise in total variation from the 
           eSSP-EIS method without post-processing. The cyan dotted line is the maximal rise in total variation from the non-SSP method.
           The red dashed line is the difference between the total variation of the un-processed solution and that of the 
           post-processed solution. Left: eSSP-EIS+(3,4) compared to the eEIS+(2,4) method. 
           Right: eSSP-EIS+(4,5) compared to the implicit eEIS+(4,5) method.
            \label{SSPtest}  }
\end{figure}

In the left graph of Figure \ref{SSPtest} we see that the eSSP-EIS(3,4) preserves the
 total variation diminishing property up to $\lambda \approx 1.2$
(larger than the predicted value of $\lambda \leq 0.75$).
We compare the maximal rise in total variation from the eSSP-EIS(3,4) method (blue solid line) to the
maximal rise in total variation from the eEIS(2,4) method (cyan dashed line), which is not SSP, 
and indeed we see that the total variation is comparatively large for even small values of $\dt$.
The  maximal difference between the total variation of the
solution and the post-processed solution (red dashed line) also remains small $(\approx 10^{-14})$.
 
On the right graph of Figure \ref{SSPtest} we see that the eSSP-EIS(4,5) preserves the
 total variation diminishing property up to $\lambda \approx 1.16$ (larger than the predicted value of $\lambda \leq 0.63$).
We compare the eSSP-EIS(4,5) method to the implicit non-SSP iEIS+(4,5) method  presented in Section 4.2.
Clearly, the maximal rise in total variation of the implicit non-SSP method (dotted cyan line) is large for any $\dt$, while the
maximal rise in total variation from the eSSP-EIS(4,5) method (blue solid line) remains very small 
$(\approx 10^{-15})$ up to $\lambda \approx 1.16$. The  maximal difference between the total variation of the
solution and the post-processed solution (red dashed line) also remains small $(\approx 10^{-14})$.

We see that the difference between the total variation of the solution at the  final time and the postprocessed solution
is minimal for almost all the values of $\dt$ for which the maximal rise in total variation remains bounded.
We observe a jump in total variation of the post-processed solution occurs for a slightly smaller $\dt$ than 
the value at which we observe the jump in the total variation of the un-processed solution $u^n$.
Although the method itself was designed to be SSP, the post-processor is only designed to
extract a higher order solution but not to preserve the strong stability properties. 
This is not a concern because preserving the nonlinear stability properties is generally only
important for the stability of the time evolution: once we reach the final time solution these properties
are no longer needed.  Although we do not expect the  post-processor to preserve the nonlinear stability 
properties that the time-evolution does preserve, it is pleasant to see that for this case it does indeed do so
for most relevant values of $\dt$.

\subsection{Comparison of implicit schemes}
In Section 4.2 we presented implicit EIS schemes that can be post-processed to attain higher order.
Here we demonstrate that these methods attain the design order of convergence on several standard test 
cases, and show that although these methods suffer from order reduction (as expected from implicit schemes
that have lower stage order than overall order) the size of the errors is still small.

%
%

\begin{table}[h] \label{implicitAdvDiff} {
\footnotesize
\begin{center}
\begin{tabular}{|c|cc|cc||cc|cc|} \hline
& \multicolumn{2}{c|}{at final time}& \multicolumn{2}{c|}{post-processor} & \multicolumn{2}{c|}{at final time}&
\multicolumn{2}{c|}{post-processor } \\ \hline
& \multicolumn{4}{|c|}{iEIS+(2,3)} & \multicolumn{4}{|c|}{iEIS+(2,3)$_p$}   \\ 
M & error &   order  & error & order& error &   order  & error &   order\\ \hline
100 &  8.95$\times 10^{-4}  $   &      --     &    8.49$\times 10^{-5}  $   &      --   
       &  4.48$\times 10^{-3}  $   &      --     &    3.20$\times 10^{-4}  $   &      --  \\
150 &  3.95$\times 10^{-4}  $   & 2.02     &    2.50$\times 10^{-5}  $   & 3.01  
&  2.04$\times 10^{-3}  $   & 1.94     &    9.79$\times 10^{-5}  $   & 2.92 \\
200 &  2.21$\times 10^{-4}  $   & 2.02     &    1.05$\times 10^{-5}  $    & 3.01 
&  1.16$\times 10^{-3}  $   & 1.96     &    4.20$\times 10^{-5}  $    & 2.95  \\
250 &  1.41$\times 10^{-4}  $   & 2.01     &    5.38$\times 10^{-6}  $    & 3.01  
       &  7.95$\times 10^{-4}  $   & 1.97     &    2.17$\times 10^{-5}  $    & 2.96 \\
300 &  9.78$\times 10^{-5}  $    & 2.01    &    3.11$\times 10^{-6}  $    & 3.01 
       & 5.23$\times 10^{-4}  $   & 1.98    &    1.26$\times 10^{-5}  $    & 2.97   \\ \hline	
& \multicolumn{4}{|c|}{iEIS+(3,4)$_p$} & \multicolumn{4}{|c|}{iEIS+(4,5)$_p$}  \\
100 &  3.29$\times 10^{-5}  $   &      --     &     4.33 $\times 10^{-6}  $   &      --  
       &  8.32$\times 10^{-7}  $   &      --     &    5.13$\times 10^{-8}  $   &      --  \\
150 &  9.51$\times 10^{-6}  $   & 3.06     &     8.60$\times 10^{-7}  $   & 3.99    
       &  1.64$\times 10^{-7}  $   & 4.01     &     7.24$\times 10^{-9}  $   & 4.83 \\ 
200 &  3.96$\times 10^{-6}  $   & 3.04     &     2.73$\times 10^{-7}  $    & 3.99   
       &  5.17$\times 10^{-8}  $    &4.00  &        1.78$\times 10^{-9}  $    & 4.88 \\   
250 &  2.01$\times 10^{-6}  $   & 3.03     &     1.12$\times 10^{-7}  $    & 3.99   
       &  2.12$\times 10^{-8}  $   & 4.00   &       5.94$\times 10^{-10}  $    & 4.91 \\ 
300 &  1.16$\times 10^{-6}  $    & 3.03    &     5.40$\times 10^{-8}  $    & 3.99   
       &  1.02$\times 10^{-8}  $    & 4.00   &      2.42$\times 10^{-10}  $    & 4.93   \\ \hline	
 \end{tabular}
 \end{center} }
   \caption{Implicit solvers on Advection diffusion problem. The step-size is $\dt = 1/M$ where $M$ is given in the table. 
   Four methods from Section 4.1 are tested. The reference solution is computed by {\sc Matlab}'s ode45 subroutine.}
    \end{table}

\noindent{\bf Advection-diffusion problem:}
We repeat the advection-diffusion problem above and evolve the ODE
\[ y' = \left( -D_x   + \frac{1}{10} D_x^2 \right) y,\]
where $D_x$ is  the first order Fourier differentiation matrix and $D_x^2$ is the second order 
Fourier differentiation matrix based on $N=41$ points in space.
We use the implicit methods iEIS+(2,3),
 iEIS+(2,3)$_p$, iEIS+(3,4)$_p$, and  iEIS+(4,5)$_p$  to evolve this problem to the final time 
 $T_f=1.0$ and postprocess the solution at the final time as described in Section \ref{sec:postproc}. 
 Note that $\dt= \frac{1}{M}$ can be much larger here than when using explicit methods.
 We compute the  reference solution using {\sc Matlab}'s ode45 subroutine.
The numerical tests confirm that we observe the order of accuracy predicated by the EIS theory for the solution
and the post-processed solution.

\noindent{\bf Prothero--Robinson problem:}
This problem has been used to reveal the error reduction phenomenon that affects implicit methods.
We test our implicit methods on the non-autonomous ODE
\[ \frac{ dy}{dt}  = -a (y-sin(t)) + cos(t) \]
with initial condition $y^0 = 0$.  
We use the values $a=10$ and $a=1000$, to show the order reduction phenomenon.
We run this problem to final time $T_f=1.0$ using the iEIS+(s,p)$_p$ methods.
Note that this problem has the solution $y = sin(t)$ regardless of the value of $a$,
making the comparison easy.

Figure \ref{ProtheroRobinson} (left)  shows that the order of convergence for the case $a=10$ is 
close to the $p+1$ design order  without post-processing and the enhanced $p+2$ with post-processing. 
In contrast, the right graph shows that when $a=1000$ the convergence rate without post-processing is just
what is expected from a  truncation error analysis, while after  post-processing there is improvement, 
but the order reduction  is still apparent. 
However, it is important to note that the magnitude of the errors is {\em smaller} 
in the case $a=1000$ than when $a=10$. In this case, we observe that the order reduction phenomenon
is apparent but does not result in an increase of the errors. 

\begin{figure}
    \centering
          \includegraphics[width=.51\textwidth]{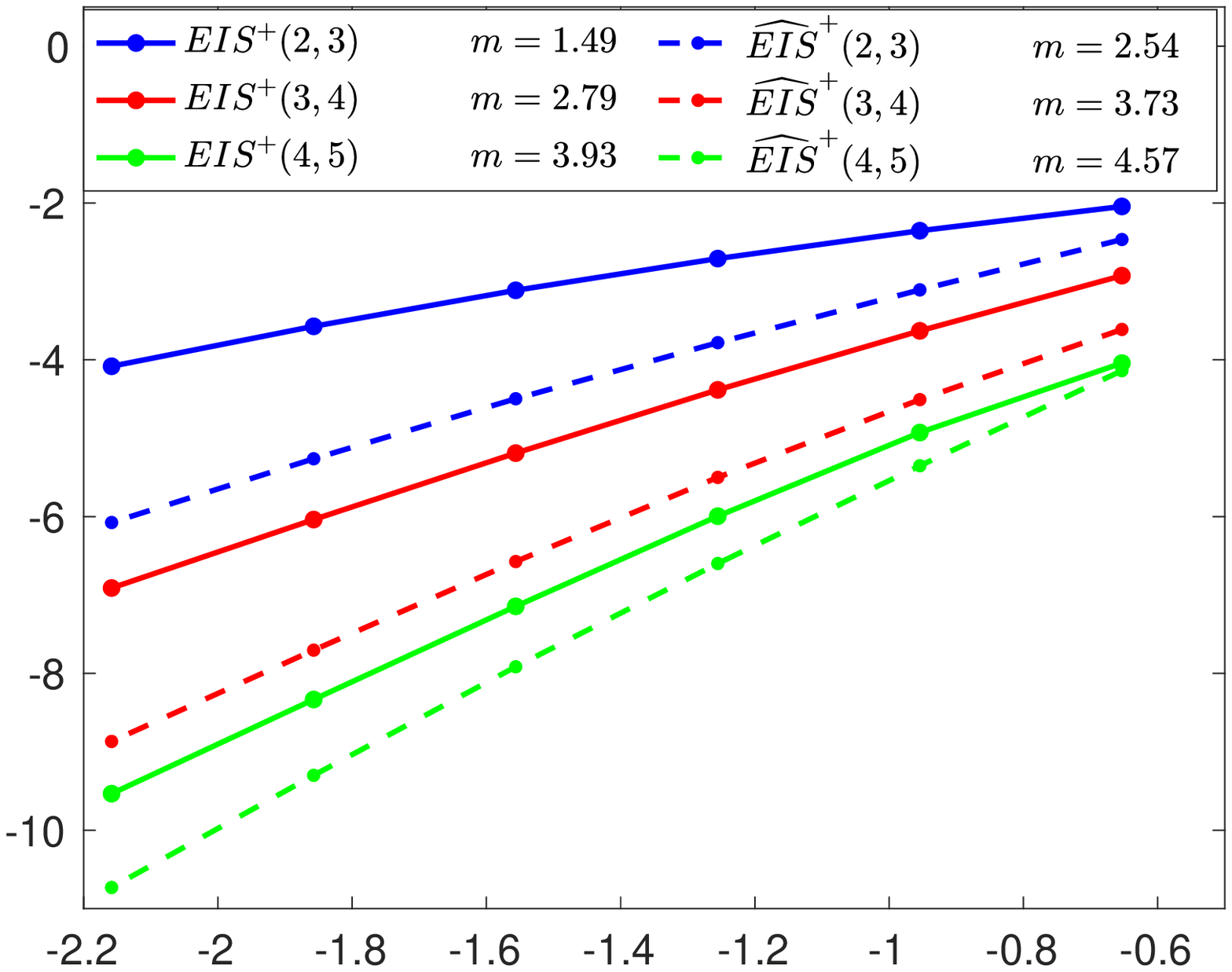} \hspace{-.2in}
            \includegraphics[width=.51\textwidth]{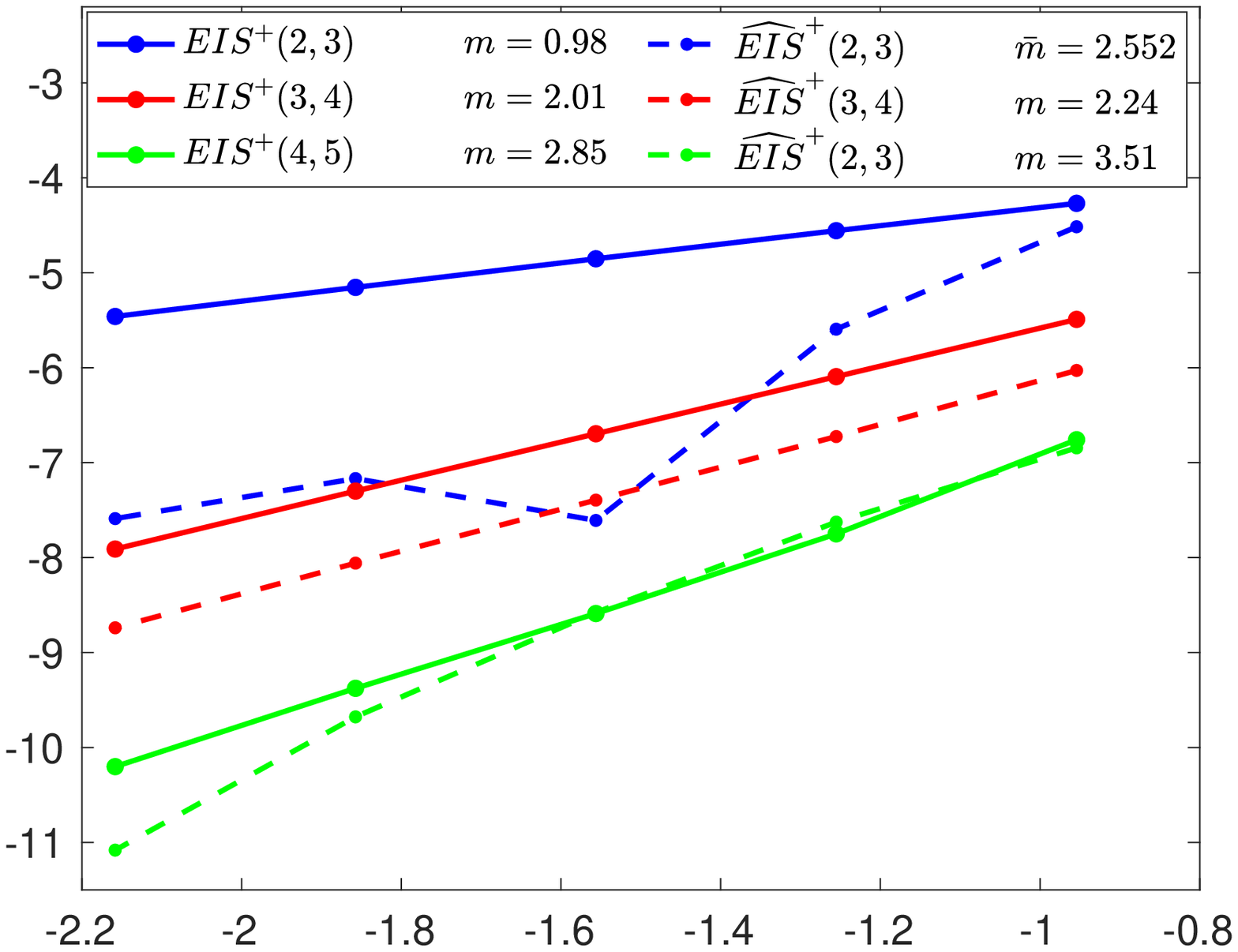}
           \caption{Convergence test on the Prothero Robinson problem. The x-axis has $log_{10}$
           of the time-step while the y-axis has $log_{10}$
           of the errors. On the left is $a=10$ and on the right $a=1000$.
            \label{ProtheroRobinson}  }
\end{figure}

\section{Conclusions}  \label{sec:conclusions}
In this work we extended the  EIS framework in \cite{EISpaper1} to explicit methods that use function 
evaluations of newly computed values and
to implicit methods. More significantly, we presented additional conditions on the coefficients of the method 
that allow the final solution to be post-processed in order to extract higher order accuracy. 
The new EIS conditions \eqref{EIS2conditions} not only control the growth of the errors (as we showed in \cite{EISpaper1}) 
but also allow us to precisely define the leading error term. Knowing the form of the 
leading error term we can compute the post-processor defined in \ref{sec:postproc}, and apply it to the solution to extract a solution 
that is two orders of accuracy higher.  

We used the new EIS+ framework to formulate an optimization code in {\sc Matlab}
to find methods that satisfy the order and the EIS+ conditions. We presented some of these
EIS+ methods and their stability regions and we tested them on sample problems to show their convergence properties.
We confirmed that the numerical solutions coming from these methods are indeed
 one  order higher than expected from a truncation error analysis, and two orders higher when post-processed. 
In future work, we plan to consider other conditions on $\mD, \mA, \mR$ that allow the error inhibiting behavior to occur,
 to extend this approach to multi-derivative and implicit-explicit methods, and to create variable step-size methods with
 EIS properties.

\appendix

\section{An alternative error analysis} 
\label{sec:appendixA}


In this section we provide an alternative proof for the accuracy of the proposed schemes. This proof is similar to the the one given in \cite{EISpaper1}.  This proof directly tracks the dynamics of the error, 
rather than tracking the behavior of the solution, as we did in Section 3.2.

We write the scheme as
\begin{equation} \label{App_A_10}
\left( I -  \Delta t \mR F  \right)  V^{n+1}  \,= \, 
\left(  \mD  +  \Delta t \mA F \right) V^n \, ,
\end{equation}
where we assume, as we did above, that the scheme  \eqref{EISmethod}  (equivalently \eqref{App_A_10} )
 is zero--stable, so that the operator $\left( I -  \Delta t \mR F  \right)^{-1} $ is bounded. 
In order to evaluate the operator $\left( I -  \Delta t \mR F  \right)$ we use Lemma \ref{lemma1} to obtain:
\begin{eqnarray}\label{App_A_20}
F(V^{n+1}  )   &= & F(U^{n+1} + E^{n+1} ) \nonumber \\
 &= & F(U^{n+1} )  + F_y^{n+1}  E^{n+1}   + O(\dt)  O\left( \|E^{n+1} \| \right) +  O\left( \|E^{n+1} \|^2 \right)  \nonumber \\
&= &  F(U^n)  + F_y^n E^{n+1}  +  O(\dt)  O\left( \|E^{n+1}\| \right) \,,
\end{eqnarray}
where, $F_y^n = F_y(u(t_n))$ and $F_y^{n+1} = F_y(u(t_{n+1} ))$.

We now subtract \eqref{localerrors}  from \eqref{App_A_10}, 
and use \eqref{App_A_20} to obtain an equation for the relationship between 
the errors at two successive time-steps:
\begin{equation} \label{App_A_30}
\left( I -  \Delta t \mR F_y^n  +  O(\dt^2)  \right )  E^{n+1}  \,= \, 
\left(  \mD  +  \Delta t \mA F_y^n  +  O(\dt^2) \right) E^n  + \ste^{n+1} \, .
\end{equation}

For the accuracy analysis we assume that $ |F_y^n|= O(1)$, and therefore
 $ \|  \dt  \mR F_y^n\| = O(\dt) \ll 1$. This observation is then used for approximating 
\begin{equation} \label{App_A_40}
 \left( I -  \Delta t \mR F   +  O(\dt^2)  \right)^{-1} \,= \, I +  \Delta t \mR F_y^n  + O(\dt^2)  \,.
\end{equation}
We then plug this into \eqref{App_A_30} to obtain a linear recursion relation for the error:
\begin{eqnarray} \label{App_A_50}
E^{n+1} &= &\left( I -  \Delta t \mR F_y^n  +  O(\dt^2) \right)^{-1} 
\left[ \left(  \mD  +  \Delta t \mA F_y^n +  O(\dt^2) \right) E^n  + \ste^{n+1} \right ], \nonumber \\
&= &   \left(  \mD  +  \Delta t F_y^n  \left( \mR  \mD+\mA \right)+  O(\dt^2) \right) E^n +  \left( I +  \Delta t F_y^n \mR  +  O(\dt^2) \right) \ste^{n+1} \nonumber \\  
&\equiv & \hat{Q}^n E^n + \dt T_e^n \,,
\end{eqnarray}
where
\begin{equation}\label{App_A_55}
 T_e^n = \Delta t^{p}  \ste^{n+1}_{p+1} +  \Delta t^{p+1} \left ( \ste^{n+1}_{p+2} + F_y^n \mR \ste^{n+1}_{p+1} \right ) + O\left ( \Delta t^{p+2} \right )
\end{equation}

\begin{lem}  \label{App_A_lemma1}
The equation which governs the dynamics of $E^n$,  \eqref{App_A_50},  is essentially linear in the sense that there is a time interval, $0 \le t \le T$, 
such that  the nonlinear terms are of higher order, and thus much smaller, than the leading terms in the equation.
\end{lem}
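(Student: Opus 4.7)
The plan is to separate the recursion \eqref{App_A_50} into a genuinely linear part and a ``nonlinear remainder'' whose size I must then control. The leading linear part is $\hat{Q}^n E^n + \dt T_e^n$, with $T_e^n = O(\dt^p)$ by the order conditions; the remainder collects three kinds of higher-order contributions: (i) the $O(\|E^{n+1}\|^2)$ terms from the Taylor expansion of $F$ in \eqref{App_A_20}, (ii) the $O(\dt^2)$ corrections in the Neumann expansion \eqref{App_A_40}, and (iii) cross terms such as $\dt^2 F_y^n \mR F_y^n \mA E^n$ that arise when these two expansions are multiplied together. First I would group everything into a single remainder $R^n = R^n(E^n, E^{n+1}, \dt)$ and use the smoothness of $F$ on a neighborhood of the exact trajectory to obtain a pointwise bound of the form $\|R^n\| \le C_1 \dt^2 \|E^n\| + C_2 \|E^n\|^2$.

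Second, I would run a discrete Gronwall argument on the full recursion $E^{n+1} = \hat{Q}^n E^n + \dt T_e^n + R^n$. Zero-stability of the scheme provides a uniform bound on the product $\hat{Q}^n \hat{Q}^{n-1} \cdots \hat{Q}^0$ for $n\dt \le T$, so \emph{if} $R^n$ were negligible, the standard argument would give $\|E^n\| = O(\dt^p)$ on $[0,T]$. To legitimately treat $R^n$ as negligible, I would adopt the induction hypothesis $\|E^k\| \le M \dt^p$ for $k \le n$, substitute into the bound on $R^n$ to obtain $\|R^n\| = O(\dt^{p+2}) + O(\dt^{2p})$, and verify that this contribution is absorbed into the Gronwall estimate without changing the constant $M$, provided $\dt$ is small enough and $T$ is fixed.

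The main obstacle will be the apparent circularity: the Taylor expansions used to derive \eqref{App_A_50} are only legitimate when $\|E^n\|$ is already known to be small, but establishing that bound itself requires that the recursion behave essentially linearly. The clean way around this is a bootstrap (continuous-induction) argument. Define
\[
n^{*} \;=\; \max\bigl\{\, n \le T/\dt \;:\; \|E^k\| \le 2 M \dt^p \text{ for all } k \le n \,\bigr\}.
\]
Under this hypothesis the nonlinear remainder satisfies $\|R^k\| = O(\dt^{p+2})$ uniformly for $k \le n^*$, and the Gronwall estimate then yields the strictly stronger conclusion $\|E^{n^{*}+1}\| \le M \dt^p$. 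This contradicts maximality unless $n^{*} = \lfloor T/\dt \rfloor$, so the bound actually holds throughout the interval.

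Having established $\|E^n\| = O(\dt^p)$ uniformly on $0 \le t_n \le T$, a direct comparison of magnitudes closes the argument: the retained linear terms in \eqref{App_A_50} are of order $\dt \cdot \|E^n\| = O(\dt^{p+1})$ per step (with truncation forcing of order $\dt^{p+1}$ as well), whereas the dropped contributions in $R^n$ are of order $\dt^{p+2}$ or smaller. Hence on $[0,T]$ the nonlinear terms are strictly of higher order than the leading linear dynamics, which is exactly the ``essentially linear'' statement of the lemma.
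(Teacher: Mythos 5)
Your proposal follows essentially the same route as the paper's proof: zero-stability gives $\|\hat{Q}^n\|\le 1+c\,\dt\le e^{c\dt}$, a discrete Gronwall/geometric-sum argument yields $\|E^n\|\le 2\,\frac{e^{c t_n}-1}{c\,\dt}\max_{0\le\nu\le n}\|\ste^\nu\| = O(\dt^p)$ on a fixed interval, and the dropped nonlinear contributions are then of higher order. The only place you go beyond the paper is the bootstrap (continuous-induction) step that closes the apparent circularity; the paper instead just appends the a posteriori self-consistency caveat that the estimate holds as long as $\|E^n\|^2\ll O(\dt^2)\max_{0\le\nu\le n}\|\ste^\nu\|$, so your argument supplies a rigorous justification for exactly the point the paper leaves as an unverified condition.
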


\begin{proof} 
It is assumed that the initial numerical condition, $V^0$, which is derived by the initial condition of the ODE and other schemes, is either accurate to machine precision or, at least accurate as desired.  Thus, at $n=0$, the scheme is essentially linear.

By assumption, the scheme is zero--stable, therefore 
  \begin{displaymath}
\| \left(  \mD  +  \Delta t F_y^n  \left( \mR \mD +\mA \right)+  O(\dt^2) \right)\| \le 1+c \dt \le \exp({c \dt })
  \end{displaymath}
and, due to the boundedness of $F_y^n$ and $\mR$
 \begin{displaymath}
\| \left( I +  \Delta t F_y^n \mR  +  O(\dt^2) \right)  \| \le 2 \,.
  \end{displaymath}
Therefore, 
 \begin{displaymath}
\| E^n  \| \le 2 \frac{\exp({c \, t_n })-1}{c \dt}\max_{0\le\nu\le n}\|\ste^{\nu} \|\,.
  \end{displaymath}
This estimate holds as long as $\| E^n  \|^2 \ll  O(\dt^2)  \max_{0\le\nu\le n}\|\ste^{\nu}  \| \, .$\\

\end{proof}

As the error equation satisfies an iterative process of the form \eqref{App_A_50}, we state the 
following Lemma that we will use later to understand the dynamics of the growth in the error.
\begin{lem}  \label{App_A_lemma2}

Given an iterative process of the form
\begin{equation}\label{App_A_90}
W^{n+1} \,=\, Q^n W^n + \dt F(W^n, t_n)
\end{equation}
where $Q^n$ is a linear operator, the 
Discrete Duhamel principle states that  
\begin{equation} \label{App_A_100}
W^{n} \, = \,   \prod_{\mu=0}^{n-1} Q^{\mu}  W^0  \,+\,  \dt \, \sum_{\nu=0}^{n-1}  
\left( \prod_{\mu=\nu+1}^{n-1} Q^{\mu}  \right)  F(W^{\nu}, t_{\nu}) \;.
\end{equation}
\end{lem}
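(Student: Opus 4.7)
The plan is to prove the Discrete Duhamel identity by induction on $n$, using only the linearity of $Q^n$ and the recursion \eqref{App_A_90}. The argument is essentially bookkeeping, so the main work is just fixing the convention for empty products: I would declare at the outset that $\prod_{\mu=a}^{b} Q^{\mu} = I$ whenever $b < a$, which makes the $\nu = n-1$ term in the sum on the right-hand side of \eqref{App_A_100} equal to $\dt F(W^{n-1}, t_{n-1})$, consistent with the last step of \eqref{App_A_90}.

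For the base case $n=1$, the right-hand side of \eqref{App_A_100} reduces to $Q^0 W^0 + \dt F(W^0, t_0)$ (the product in the sum is empty and hence $I$), which matches $W^1$ given directly by \eqref{App_A_90} with $n=0$.

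For the inductive step, assume \eqref{App_A_100} holds at level $n$. Applying \eqref{App_A_90} and then invoking linearity of $Q^n$ to pull it inside the inductive expression for $W^n$ yields
\begin{align*}
W^{n+1} &= Q^n W^n + \dt F(W^n, t_n) \\
&= Q^n \prod_{\mu=0}^{n-1} Q^{\mu} \, W^0 + \dt \sum_{\nu=0}^{n-1} Q^n \left( \prod_{\mu=\nu+1}^{n-1} Q^{\mu} \right) F(W^{\nu}, t_{\nu}) + \dt F(W^n, t_n) \\
&= \prod_{\mu=0}^{n} Q^{\mu} \, W^0 + \dt \sum_{\nu=0}^{n-1} \left( \prod_{\mu=\nu+1}^{n} Q^{\mu} \right) F(W^{\nu}, t_{\nu}) + \dt F(W^n, t_n).
\end{align*}
The final term $\dt F(W^n, t_n)$ is exactly the $\nu = n$ contribution to the sum (with the convention that $\prod_{\mu = n+1}^{n} Q^{\mu} = I$), so absorbing it into the sum extends the upper limit from $n-1$ to $n$, yielding \eqref{App_A_100} at level $n+1$.

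There is no genuine mathematical obstacle here; the only thing that could go wrong is an off-by-one slip in the product indices or a failure to use linearity correctly, which is why I would state the empty-product convention explicitly before starting the induction. Note that the lemma does not require any invertibility or norm assumption on $Q^n$, nor any smoothness on $F$, since it is a purely algebraic unrolling of a first-order linear-plus-forcing recursion; this is what allows the subsequent error analysis in the appendix to apply \eqref{App_A_100} to the concrete recursion \eqref{App_A_50} with $Q^n \leftarrow \hat{Q}^n$ and the forcing $T_e^n$ from \eqref{App_A_55}.
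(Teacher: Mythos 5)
Your proof is correct. Note, however, that the paper does not actually prove this lemma: it simply cites it as Lemma 5.1.1 of Gustafsson, Kreiss, and Oliger and refers the reader there. Your induction is the standard argument one would find in that reference -- unrolling the first-order linear-plus-forcing recursion -- so you have supplied a complete, self-contained proof where the paper offers only a citation. Your explicit handling of the empty-product convention is exactly the right thing to pin down, and your observation that no invertibility, norm bound, or smoothness is needed (the identity is purely algebraic) is accurate and consistent with how the appendix then applies the formula to \eqref{App_A_50}. The only convention you use implicitly rather than explicitly is that the products $\prod_{\mu=\nu+1}^{n-1} Q^{\mu}$ are time-ordered, with larger indices acting on the left; this is forced by your inductive step (you left-multiply by $Q^n$ to extend the product) and matches how the paper uses the formula in \eqref{App_A_110} and \eqref{App_A_120}, but for non-commuting operators it would be worth stating alongside the empty-product convention.
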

This is  Lemma 5.1.1 in  \cite{gustafsson1995time} and the proof  is given there.
\bigskip

Using the observation that Equation  \eqref{App_A_50}, which governs the dynamics of $E^n$, 
 is essentially linear, we can use the Discrete
Duhamel principle, \eqref{App_A_100} to calculate $E^n$.
\begin{equation} \label{App_A_110}
E^{n} \, = \,   \prod_{\mu=0}^{n-1} \hat{Q}^{\mu}  E_0  \,+\,  \dt \, \sum_{\nu=0}^{n-1}  
\left( \prod_{\mu=\nu+1}^{n-1} \hat{Q}^{\mu}  \right)  T_e^{\nu} \;.
\end{equation}

The first term in \eqref{App_A_110} is negligible because we assume that the initial error can be made arbitrarily small. In order to evaluate the second term we divide the sum into three parts:
\begin{enumerate}
\item The final term, $\nu= n-1$, is $\dt T_e^{n-1}$ which is clearly of order $\dt \| T_e^{n-1} \| = \| \ste^{n+1} \|  = O(\dt^{p+1})$. This term is the one filtered in the postprocessing stage.

\item The next term, $\nu= n-2$, is $\dt \hat{Q}^{n-1} T_e^{n-2}$.  This term can be made  of order $\dt^2 \| T_e^{n-2} \| $ provided that $\|\hat{Q}^{n-1} T_e^{n-2} \| = O\left (\dt \right) O \left(\| T_e^{n-2} \| \right )$ which is true due to \eqref{con1}: $ \mD \ste_{p+1} = 0$.

\item The rest of the sum;
\begin{eqnarray} \label{App_A_120}
\left\| \dt \, \sum_{\nu=0}^{n-3}  \left( \prod_{\mu=\nu+1}^{n-1} \hat{Q}^{\mu}  \right)  T_e^{\nu}  \right\| & = &
\left\| \dt \, \sum_{\nu=0}^{n-3}  \left( \prod_{\mu=\nu+3}^{n-1}\hat {Q}^{\mu}  \right)   \left(\hat {Q}^{\nu+2}\hat {Q}_{\nu+1} T_e^{\nu} \right) \right\|  \nonumber \\
& \leq & \dt \, \sum_{\nu=0}^{n-3}  \left\|  \prod_{\mu=\nu+3}^{n-1} \hat{Q}^{\mu}  \right\|   \left\|  \hat{Q}^{\nu+2}  \hat{Q}_{\nu+1} T_e^{\nu}  \right\| \nonumber  \\
& \leq & \dt \, \sum_{\nu=0}^{n-3}\left( 1+ c \,\dt \right)^{n- \nu -3}   \left\|  \hat{Q}^{\nu+2}  \hat{Q}^{\nu+1} T_e^{\nu}  \right\| \nonumber  \\
& = & \frac{\exp \left( c t_n\right) -1}{c}  \max_{\nu=0, . . .,  n-3}  \left\|  \hat{Q}^{\nu+2}  \hat{Q}^{\nu+1} T_e^{\nu}  \right\|  . 
\end{eqnarray}
Using the definition of the operators $\hat{Q}^\mu$ we have:
\[  \hat{Q}^{\nu+2}  \hat{Q}^{\nu+1} =  \left[  \mD^2  +  \Delta t
\left(  F_y^{\nu+2}   \left( \mR \mD+\mA \right) \mD +F_y^{\nu+1}  \mD \left( \mR \mD +\mA \right)  \right )
+  O(\dt^2) \right] \]
so that $ \hat{Q}^{\nu+2}  \hat{Q}^{\nu+1}T_e^{\nu}$ becomes
\begin{eqnarray*}
&&  \left[  \mD^2  +  \Delta t
\left(  F_y^{\nu+2}   \left( \mR \mD+\mA \right) \mD +F_y^{\nu+1}  \mD \left( \mR \mD +\mA \right)  \right )
+  O(\dt^2) \right] T_e^{\nu} \\
 &=&   \mD^2   T_e^{\nu}  +  \Delta t
 F_y^{\nu+2}   \left( \mR \mD+\mA \right) \mD   T_e^{\nu} 
+   F_y^{\nu+1}  \mD \left( \mR \mD +\mA \right)T_e^{\nu} +
  O(\dt^2)  T_e^{\nu} \\
  & = &   \Delta t^{p}  \mD^2  \ste^{\nu+1}_{p+1} +  
  \Delta t^{p+1} \mD^2  \left( \ste^{\nu+1}_{p+2} + F_y^{\nu+1} \mR \ste^{\nu+1}_{p+1} \right) \\
  & +  &   \Delta t^{p+1}  F_y^{\nu+2}  \left(  \mR \mD+\mA \right) \mD   \ste^{\nu+1}_{p+1} +   
 \Delta t^{p+1}   F_y^{\nu+1}  \mD \left( \mR \mD +\mA \right)  \ste^{\nu+1}_{p+1} +   
 O(\dt^{p+2})  \\
 & = &   \Delta t^{p}  \left( \mD  +  
 \Delta t  F_y^{\nu+2}  \left(  \mR \mD+\mA \right) 
 + \dt F_y^{\nu+1}  \mD \mR \right)  \mD  \ste^{\nu+1}_{p+1}  \\
  &+ &  
  \Delta t^{p+1} \mD^2  \left( \ste^{\nu+1}_{p+2} + F_y^{\nu+1} \mR \ste^{\nu+1}_{p+1} \right) +
   \Delta t^{p+1}   F_y^{\nu+1}  \mD \mA  \ste^{\nu+1}_{p+1} +   O(\dt^{p+2})  \\
  & = &   \Delta t^{p}  \left( \mD  +  
 \Delta t  F_y^{\nu+2}  \left(  \mR \mD+\mA \right) 
 + \dt F_y^{\nu+1}  \mD \mR \right)  \mD  \ste^{\nu+1}_{p+1}  \\
  &+ &    \Delta t^{p+1} \left( F_y^{\nu+1} \mD^2  \mR   +  F_y^{\nu+1}  \mD \mA  \right) \ste^{\nu+1}_{p+1} 
  +  \Delta t^{p+1} \mD^2  \ste^{\nu+1}_{p+2} +   O(\dt^{p+2}) . 
\end{eqnarray*}
Using the fact that in our case $\mD^2 = \mD$ and that $F_y^{\nu+1}  = F_y^{\nu}  +O(\dt)$ we obtain
\begin{eqnarray*}
 \hat{Q}^{\nu+2}  \hat{Q}^{\nu+1}T_e^{\nu} & = & 
 \Delta t^{p}  \left( \mD  +  
 \Delta t  F_y^{\nu+2}  \left(  \mR \mD+\mA \right) 
 + \dt F_y^{\nu+1}  \mD \mR \right)  \mD  \ste^{\nu+1}_{p+1}  \\
  &+ &    \Delta t^{p+1}  F_y^{\nu+1}  \mD  \left(   \mR   +   \mA  \right) \ste^{\nu+1}_{p+1} 
  +  \Delta t^{p+1} \mD  \ste^{\nu+1}_{p+2} +   O(\dt^{p+2}) .
\end{eqnarray*}
The first term and third terms disappear because  of \eqref{con1} and \eqref{con2}
\[\mD \ste_{p+1} = 0 \; \; \; \; \; \mbox{and} \; \; \; \; \; \mD \ste_{p+2} =0. \] 
The second term is eliminated by \eqref{con3} 
\[ \mD  \left(   \mR   +   \mA  \right) \ste_{p+1}  = 0 .\]
So that 
\[  \hat{Q}^{\nu+2}  \hat{Q}^{\nu+1}T_e^{\nu}  = O(\dt^{p+2})  = O(\dt^{2})  O(\|T^\nu_e\|).\]

Putting this all back together we see that 
\[
E^{n} \, = \,  \dt T^{n-1}_e + O(\dt^{2})  O(\|T^{n-2}_e\|) +O(\dt^{2})  O(\|T^{n}_e\|).
\]

\end{enumerate}

\section{From scalar to vector notation} 
\label{sec:appendixB}
In Section \ref{EISppTheory}  we developed the theory for EIS methods with post-processing.
In that section we used, for simplicity, scalar notation. In this appendix we show in detail how 
the vector case is similarly developed. For ease of explanation, we consider the equation
\[ \left( \begin{array}{l}
u(t) \\
\omega(t) \\
\end{array} \right)_t 
= \left( \begin{array}{l}
f(u(t),\omega(t)) \\
g(u(t),\omega(t)) \\
\end{array} \right).
\]
This is only a vector of two components, but it will be clear from this explanation why the theory developed for the scalar
case is sufficient for vectors of any number of components.

The method still has a similar  form to  \eqref{EISmethod}
\begin{equation} 
V^{n+1} = \mD \otimes V^n +  \Delta t \mA  \otimes F( V^n) +  \Delta t \mR  \otimes F( V^{n+1}) , 
\end{equation}
 but here
$V^n$ is a  vector of length $2s$ that contains the numerical solutions
at times  $\left( t_n+ c_j \Delta t \right)$ for $j=1,\ldots,s$:
\begin{equation*}
V^n = \left(v(t_n+ c_1 \Delta t))  , \ldots ,v(t_{n} + c_{s} \Delta t ) ,
w(t_n+ c_1 \Delta t)) , \ldots ,w(t_{n} + c_{s} \Delta t ) 
 \right )^T ,
\end{equation*}
which are approximations to the exact solutions 
\begin{equation*}
U^n = \left(u(t_n+ c_1 \Delta t)) , \ldots ,u(t_{n} + c_{s} \Delta t ) ,
\omega(t_n+ c_1 \Delta t)) ,   \ldots ,\omega(t_{n} + c_{s} \Delta t ) 
 \right )^T .
\end{equation*}
Correspondingly, we define
\begin{equation*}
F(U^n )   =   \left( \begin{array}{c}
f(v(t_n+ c_1 \dt ) ,  w(t_n+ c_1 \dt ) )\\
f(v(t_n+ c_2 \dt ) ,  w(t_n+ c_2 \dt ) )\\
\vdots  \\
f(v(t_n+ c_s \dt ) ,  w(t_n+ c_s \dt ) )\\
g(v(t_n+ c_1 \dt ) ,  w(t_n+ c_1 \dt ) )\\
g(v(t_n+ c_2 \dt ) ,  w(t_n+ c_2 \dt ))\\
\vdots  \\
g(v(t_n+ c_s \dt ) ,  w(t_n+ c_s \dt ))\\
 \end{array} \right).
\end{equation*}
We see that the vectors $U^n$ and $V^n$ are simply concatenations of the two solution vectors,
and the vector function $F$ contains two different functions, each operating on the two different solution vectors.

We use the Kronecker product $\otimes $ notation to mean that the $s\times s$ matrices which operate separately on the first
set of $s$ elements and on the second set of $s$ elements. Thus, the local truncation error and approximation errors are now defined
exactly as in \eqref{localerrors} but the Taylor expansions give us
\[
 \Delta t \; LTE^n = \ste^n =  \sum_{j=0}^{\infty} \ste^n_j  \Delta t^{j}   =
 \left(
 \begin{array}{l}
 \sum_{j=0}^{\infty} \ste_j  \Delta t^{j}  \left. \frac{d^j u} {dt^j} \right|_{t=t_n}   \\
 \sum_{j=0}^{\infty} \ste_j  \Delta t^{j}  \left. \frac{d^j \omega} {dt^j} \right|_{t=t_n}   \\
 \end{array} \right)
\]

The observation in Lemma \ref{lemma1} can be re-cast in this vector form as follows:
given smooth enough functions $f$ and $g$,
\begin{eqnarray*}
F(U^n + E^n)   &=&   \left( \begin{array}{c}
f(u(t_n+ c_1 \dt ) + e(t_n+ c_1 \dt ),  \omega(t_n+ c_1 \dt )+\epsilon(t_n+ c_1 \dt ) )\\
f(u(t_n+ c_2 \dt ) + e(t_n+ c_2 \dt ),  w(t_n+ c_2 \dt )+\epsilon(t_n+ c_2 \dt ) )\\
\vdots  \\
f(u(t_n+ c_s \dt ) + e(t_n+ c_s \dt ),   \omega(t_n+ c_s \dt )+\epsilon(t_n+ c_s \dt ) )\\
g(u(t_n+ c_1 \dt ) + e(t_n+ c_1 \dt ),   \omega(t_n+ c_1 \dt )+\epsilon(t_n+ c_1 \dt ) )\\
g(u(t_n+ c_2 \dt ) + e(t_n+ c_2 \dt ),   \omega(t_n+ c_2 \dt )+\epsilon(t_n+ c_2 \dt ) )\\
\vdots  \\
g(u(t_n+ c_s \dt ) + e(t_n+ c_s \dt ),   \omega(t_n+ c_s \dt )+\epsilon(t_n+ c_s \dt ) )\\
 \end{array} \right)
 \end{eqnarray*}
 can be Taylor expanded term by term, so that  the first $s$ terms are 
 \begin{eqnarray*} 
\left(F(U^n + E^n)\right)_j   & = & 
 f(u(t_n+ c_j \dt ) + e(t_n+ c_j \dt ),   \omega(t_n+ c_j \dt )+\epsilon(t_n+ c_j \dt ) ) \\
& = & f(u(t_n+ c_j \dt ) , w(t_n+ c_j \dt ) )  \\
&+&  e(t_n+ c_j \dt ) f_u( u(t_n+ c_j \dt ) ,  \omega(t_n+ c_j \dt ) ) \\
& + &   \epsilon (t_n+ c_j \dt ) f_ \omega( u(t_n+ c_j \dt ) ,  \omega(t_n+ c_j \dt ) ) + O(\|e^n\|^2, \|\epsilon^n\|^2,\|e^n \epsilon^n\|). \\
& = & f^{n+ c_j} +  e^{n+ c_j} f_u^n +   C \dt e^{n+ c_j} +   \epsilon^{n+ c_j} f_\omega^n + \tilde{C} \dt \epsilon^{n+ c_j} 
+ O(\dt^{p+3}). 
  \end{eqnarray*}
  and similarly the next $s$ terms are 
  \begin{eqnarray*} 
\left(F(U^n + E^n)\right)_j   & = &  g^{n+ c_j} +  e^{n+ c_j} g_u^n +   \hat{C} \dt e^{n+ c_j} +   \epsilon^{n+ c_j} g_\omega^n 
+ \overline{C} \dt \epsilon^{n+ c_j}  + O(\dt^{p+3}).  
  \end{eqnarray*}

The main occurrence of the derivative term $F_y$ is in the second step of the proof. 
We can look at the first half of the vector, denoting it $ V^{k+1}_{[1:s]} $:
  \begin{eqnarray*}
 V^{k+1}_{[1:s]} & = & \mD V^k_{[1:s]} + \Delta t \mA F(V^k)_{[1:s]} + \Delta t \mR   F(V^{k+1})_{[1:s]}  \\ 
& = & \mD   \left(U^k + E^k \right)_{[1:s]} +  \Delta t \mA  F\left(U^k + E^k  \right)_{[1:s]}+ \Delta t \mR  F\left(U^{k+1}+   E^{k+1} \right)_{[1:s]} \\
  & = &  \mD  U^k_{[1:s]} + \mD  E^k_{[1:s]} +  \Delta t \mA   F(U^k)_{[1:s]}  +   \Delta t \mR F(U^{k+1})_{[1:s]} \\
  & +&  \Delta t  \left(  f_u^k   \mA E^k_{[1:s]} + f_\omega^k   \mA  E^k_{[s+1:2s]}) \right) 
  + \dt^2 \mA \left( C E^{k}_{[1:s]}   + \tilde{C}  E^{k}_{[s+1:2s]}  \right)   \\
     &+&   \Delta t  \left(  f_u^{k+1}   \mR E^{k+1}_{[1:s]} + f_\omega^{k+1}   \mR  E^k_{[s+1:2s]}) \right) 
+ \dt^2 \mR \left( C E^{k+1}_{[1:s]}   + \tilde{C}  E^{k+1}_{[s+1:2s]}  \right)   +O(\dt^{p+3})
   \end{eqnarray*}
We can write a similar formula (but using $g$) for the second half of the vector.
Despite the additional terms, this is similar to the proof in Section 3, because  the additional terms are of similar form.
 The top half and bottom half of the error vectors $E^k$ a nd $E^{k+1}$
each contain the vectors $\ste_{p+1}$ and $\ste_{p+2}$, and the error inhibiting conditions act on these parts. Thus the proof goes through
seamlessly.

\bigskip

{\bf Acknowledgment.} 
This publication is based on work supported by  AFOSR grant FA9550-18-1-0383. ONR-DURIP Grant N00014-18-1-2255. A part of this research is sponsored by the Office of Advanced Scientific Computing Research; US Department of Energy, and was performed at the Oak Ridge National Laboratory, which is managed by UT-Battelle, LLC under Contract no. De-AC05-00OR22725. This manuscript has been authored by UT-Battelle, LLC, under contract DE-AC05-00OR22725 with the US Department of Energy. The United States Government retains and the publisher, by accepting the article for publication, acknowledges that the United States Government retains a non-exclusive, paid-up, irrevocable, world-wide license to publish or reproduce the published form of this manuscript, or allow others to do so, for United States Government purposes.

\end{document}